\newcommand*\linenomathpatch{\@ifstar{\linenomathpatch@AMS}{\linenomathpatch@}}
\newcommand*\linenomathpatch@[1]{
	\expandafter\pretocmd\csname #1\endcsname {\linenomathWithnumbers}{}{}
	\expandafter\pretocmd\csname #1*\endcsname{\linenomathWithnumbers}{}{}
	\expandafter\apptocmd\csname end#1\endcsname {\endlinenomath}{}{}
	\expandafter\apptocmd\csname end#1*\endcsname{\endlinenomath}{}{}
}
\newcommand*\linenomathpatch@AMS[1]{
	\expandafter\pretocmd\csname #1\endcsname {\linenomathWithnumbersAMS}{}{}
	\expandafter\pretocmd\csname #1*\endcsname{\linenomathWithnumbersAMS}{}{}
	\expandafter\apptocmd\csname end#1\endcsname {\endlinenomath}{}{}
	\expandafter\apptocmd\csname end#1*\endcsname{\endlinenomath}{}{}
}
\let\linenomathWithnumbersAMS\linenomathWithnumbers
\patchcmd\linenomathWithnumbersAMS{\advance\postdisplaypenalty\linenopenalty}{}{}{}
\theoremstyle{plain} 
\newtheorem{thm}{Theorem}[section]
\newtheorem{prop}[thm]{Proposition}
\newtheorem{cor}[thm]{Corollary}
\newtheorem{lem}[thm]{Lemma}
\theoremstyle{definition}
\theoremstyle{remark}
\newtheorem{rem}[thm]{Remark}
\newcommand{\N}{\mathbb{N}}
\newcommand{\R}{\mathbb{R}}
\newcommand{\Z}{\mathbb{Z}}
\renewcommand{\P}{\mathbb{P}}
\newcommand{\E}{\mathbb{E}}
\newcommand{\1}[1]{\mathbf{1}_{#1}}
\renewcommand{\tilde}{\widetilde}
\renewcommand{\bar}{\overline}
\newcommand{\hyphen}{\textrm{-}}
\newcommand{\as}{\textrm{a.s.}}
\renewcommand{\S}{\mathbb{S}}
\begin{document}
\title[Differences between Lyapunov exponents]{Differences between Lyapunov exponents for the simple random walk in Bernoulli potentials}
\author[N.~KUBOTA]{Naoki KUBOTA}
\address[N. Kubota]{College of Science and Technology, Nihon University, Chiba 274-8501, Japan.}
\email{kubota.naoki08@nihon-u.ac.jp}
\thanks{The author was supported by JSPS KAKENHI Grant Number JP20K14332.}
\keywords{Random walk in random potential, Lyapunov exponent, large deviations, rate function}
\subjclass[2010]{60K37; 26A15; 82B43.}

\begin{abstract}
We consider the simple random walk on the $d$-dimensional lattice $\Z^d$ ($d \geq 1$), traveling in potentials which are Bernoulli distributed.
The so-called Lyapunov exponent describes the cost of traveling for the simple random walk in the potential, and it is known that the Lyapunov exponent is strictly monotone in the parameter of the Bernoulli distribution.
Hence, the aim of this paper is to investigate the effect of the potential on the Lyapunov exponent more precisely, and we derive some Lipschitz-type estimates for the difference between the Lyapunov exponents.
\end{abstract}

\maketitle

\section{Introduction}
In this paper, we consider the simple random walk in i.i.d.~nonnegative potentials on the $d$-dimensional lattice $\Z^d$ ($d \geq 1$).
A central object of its study is the so-called Lyapunov exponent, which measures the cost paid by the simple random walk for traveling in a landscape of potentials.
\cite{Kub20_arXiv} proved that the Lyapunov exponent strictly increases in the law of potential with some order.
Considering this result, the natural question arises as to how much the change in the law of potential affects the Lyapunov exponent.
Hence, the goal of this paper is to investigate this problem in the case where the potential is Bernoulli distributed.

\subsection{The model}
Let $\S=(S_k)_{k=0}^\infty$ be the simple random walk on $\Z^d$.
For $x \in \Z^d$, we write $P^x$ and $E^x$ for the law of the simple random walk starting at $x$ and its associated expectation, respectively.
Independently of $\S$, let $\omega=(\omega(x))_{x \in \Z^d}$ be a family of i.i.d.~random variables taking values in $[0,\infty)$, and $\omega$ is called the \emph{potential}.
Denote by $\P$ and $\E$ the law of the potential $\omega$ and its associated expectation, respectively.

Let us now introduce the Lyapunov exponent, which is the main object of study in the present article.
For any $y \in \Z^d$, $H(y)$ stands for the hitting time of the simple random walk to $y$, i.e.,
\begin{align*}
	H(y):=\inf\{ k \geq 0:S_k=y \}.
\end{align*}
Moreover, define for $x,y \in \Z^d$,
\begin{align*}
	e(x,y,\omega)
	:= E^x\Biggl[ \exp\Biggl\{ -\sum_{k=0}^{H(y)-1}\omega(S_k) \Biggr\} \1{\{ H(y)<\infty \}} \Biggr],
\end{align*}
with the convention that $e(x,y,\omega):=1$ if $x=y$.
Then, the following quantities $a(x,y,\omega)$ and $b(x,y)$ are called the \emph{quenched} and \emph{annealed travel costs} from $x$ to $y$, respectively:
\begin{align*}
	a(x,y,\omega):=-\log e(x,y,\omega)
\end{align*}
and
\begin{align*}
	b(x,y):=-\log\E[e(x,y,\omega)].
\end{align*}
The asymptotic behavior of each travel cost induces a norm on $\R^d$, and it is the Lyapunov exponent.
More precisely, Flury~\cite[Theorem~A]{Flu07}, Mourrat~\cite[Theorem~1.1]{Mou12} and Zerner~\cite[Proposition~4]{Zer98a} obtained the following result:
There exist (nonrandom) norms $\alpha(\cdot)$ and $\beta(\cdot)$ on $\R^d$ such that for all $x \in \Z^d$,
\begin{align*}
	\lim_{n \to \infty} \frac{1}{n}a(0,nx,\omega)
	&= \lim_{n \to \infty} \frac{1}{n}\E[a(0,nx,\omega)]\\
	&= \inf_{n \in \N} \frac{1}{n}\E[a(0,nx,\omega)]
		= \alpha(x) \qquad \text{$\P$-a.s.~and in $L^1(\P)$},
\end{align*}
and
\begin{align*}
	\lim_{n \to \infty} \frac{1}{n}b(0,nx)
	= \inf_{n \in \N} \frac{1}{n} b(0,nx)
	= \beta(x).
\end{align*}
We call $\alpha(\cdot)$ and $\beta(\cdot)$ the \emph{quenched} and \emph{annealed Lyapunov exponents}, respectively.

\begin{rem}
By definition, the annealed travel cost $b(x,y)$, the Lyapunov exponents $\alpha(\cdot)$ and $\beta(\cdot)$ depend on the distribution function of $\omega(0)$, say $\phi$.
From now on, we basically put a subscript $\phi$ on the above notations: $b(x,y)=b_\phi(x,y)$, $\alpha(\cdot)=\alpha_\phi(\cdot)$ and $\beta(\cdot)=\beta_\phi(\cdot)$.
\end{rem}

\subsection{Main results}
First of all, we state the motivation for the present work.
Let $F$ and $G$ be distribution functions on $[0,\infty)$ such that $F \leq G$, $F(0)<G(0)$ and $\int_0^\infty t\,dF(t)$ is finite.
Then, \cite[Theorems~{1.4} and {1.5}]{Kub20_arXiv} gives the following strict comparisons for the quenched and annealed Lyapunov exponents:
There exist constants $0<C,C'<\infty$ (which may depend on $d$, $F$ and $G$) such that for all $x \in \R^d \setminus \{0\}$,
\begin{align}\label{eq:diff_ab}
	\alpha_F(x)-\alpha_G(x) \geq C\|x\|_1 \text{ and } \beta_F(x)-\beta_G(x) \geq C'\|x\|_1,
\end{align}
where $\|\cdot\|_1$ is the $\ell^1$-norm on $\R^d$.
The above inequalities do not provide any information on how much the differences in \eqref{eq:diff_ab} are affected by $F$ and $G$.
Hence, the goal of this paper is to estimate the differences in \eqref{eq:diff_ab} more precisely by focusing on Bernoulli distributed potentials.

Let $0 \leq r \leq 1$ and $\P_r$ is the law of the potential $\omega$ defined by
\begin{align}\label{eq:potential}
	\P_r(\omega(x)=0)=1-\P_r(\omega(x)=1)=r,\qquad x \in \Z^d.
\end{align}
In this situation, we call $\omega$ the \emph{Bernoulli potential} with parameter $r$.
Moreover, write $F_r$ and $\E_r$ for the distribution function and the expectation with respect to $\P_r$, respectively.
To shorten notation, set
\begin{align*}
	b_r(x,y):=b_{F_r}(x,y),\quad \alpha_r(\cdot):=\alpha_{F_r}(\cdot),\quad \beta_r(\cdot):=\beta_{F_r}(\cdot).
\end{align*}
Then, the following two theorems are our main results, which estimate differences between quenched and annealed Lyapunov exponents.

\begin{thm}\label{thm:diff_qlyap}
We have the following lower and upper bounds for differences between quenched Lyapunov exponents:
\begin{enumerate}
	\item\label{item:diff_ql1}
		We have for all $0<p<q<1$,
		\begin{align*}
			\inf_{x \in \R^d\setminus\{0\}}\frac{\alpha_p(x)-\alpha_q(x)}{\|x\|_1}
			\geq (1-e^{-1})(q-p).
		\end{align*}
	\item\label{item:diff_ql2}
		Let $0<q_0<1$.
		Then, there exists a constant $0<\Cl{qlyap}<\infty$ (which may depend on $d$ and $q_0$) such that for all $0<p<q \leq q_0$,
		\begin{align*}
			\sup_{x \in \R^d\setminus\{0\}} \frac{\alpha_p(x)-\alpha_q(x)}{\|x\|_1} \leq \Cr{qlyap}(q-p).
		\end{align*}
\end{enumerate}
\end{thm}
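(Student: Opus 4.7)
I couple $\omega^p$ and $\omega^q$ on the same probability space via iid uniforms: let $(U(z))_{z\in\Z^d}\stackrel{\mathrm{iid}}{\sim}\mathrm{U}[0,1]$, independent of $\S$, and set $\omega^r(z):=\mathbf{1}\{U(z)>r\}$, so that $\omega^r\sim\P_r$ and $\omega^p\geq\omega^q$ pointwise whenever $p\leq q$. Writing $\delta:=\omega^p-\omega^q$ and denoting by $\hat E^{\omega^q}$ the quenched polymer expectation (the SRW law weighted by $\exp(-\sum_{k<H(y)}\omega^q(S_k))\mathbf{1}\{H(y)<\infty\}$ and normalized by $e(0,y,\omega^q)$), the factorization $\exp(-\sum\omega^p(S_k))=\exp(-\sum\omega^q(S_k))\exp(-\sum\delta(S_k))$ yields the identity
\[
a(0,y,\omega^p)-a(0,y,\omega^q) \;=\; -\log\hat E^{\omega^q}\!\Bigl[\exp\Bigl(-\sum_{k=0}^{H(y)-1}\delta(S_k)\Bigr)\Bigr].
\]
Both parts will be obtained by bounding this quantity in expectation, dividing by $n$ with $y=nx$, and invoking $\alpha_r(x)=\lim_n n^{-1}\E[a(0,nx,\omega^r)]$.

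\textbf{Part~\eqref{item:diff_ql1}.} Ordering the sites $z$ with $U(z)\in(p,q]$ by increasing $U(z)$, I telescope through the corresponding one-site flips from $1$ to $0$. If $\omega^{(i)}$ denotes the potential just after the $i$-th flip at $z_i$ and $N_z:=\sum_{k<H(y)}\mathbf{1}\{S_k=z\}$, the one-site specialization of the identity gives $a(0,y,\omega^{(i-1)})-a(0,y,\omega^{(i)})=-\log \hat E^{\omega^{(i)}}[\exp(-N_{z_i})]$. Since $N_{z_i}\geq 1$ on $\{z_i\in V(\S)\}$, the elementary bound $e^{-t}\leq 1-(1-e^{-1})\mathbf{1}\{t\geq 1\}$ (for integer $t\geq 0$) combined with $-\log(1-s)\geq s$ yields a per-flip lower bound of $(1-e^{-1})\hat Q^{\omega^{(i)}}(z_i\in V(\S))$. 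Taking expectations, the telescoped sum becomes $\int_p^q\sum_z\E[\hat Q^{\omega^r}(z\in V(\S))\mid\omega^r(z)=0]\,dr$, since conditioning on $U(z)=r$ is the same as forcing $\omega^r(z)=0$ with the other sites iid Bernoulli$(r)$. The monotonicity $\hat Q^\omega(z\in V(\S))\searrow$ in $\omega(z)$ (raising $\omega(z)$ multiplies the Gibbs weight of visiting paths by $e^{-N_z}$ while leaving non-visiting paths unchanged) yields $\E[\hat Q^{\omega^r}(z\in V(\S))\mid\omega^r(z)=0]\geq\E_r[\hat Q^{\omega^r}(z\in V(\S))]$, and summing gives $\sum_z\E_r[\hat Q^{\omega^r}(z\in V(\S))]=\E_r[\hat E^{\omega^r}[|V(\S)|]]\geq\|y\|_1$, using that every nearest-neighbour path from $0$ to $y$ contains a simple subpath with at least $\|y\|_1+1$ vertices (loop-erasure). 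Hence $\E[a(0,nx,\omega^p)-a(0,nx,\omega^q)]\geq(1-e^{-1})(q-p)\,n\|x\|_1$, and \eqref{item:diff_ql1} follows upon dividing by $n$.

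\textbf{Part~\eqref{item:diff_ql2} and the main obstacle.} For the upper bound, Jensen's inequality $-\log\hat E[e^{-X}]\leq\hat E[X]$ applied to the identity above, together with the conditional formula $\E[\delta(z)\mid\omega^q]=\mathbf{1}\{\omega^q(z)=0\}(q-p)/q$, yields
\[
\E[a(0,y,\omega^p)-a(0,y,\omega^q)] \;\leq\; \frac{q-p}{q}\,\E_q\!\Bigl[\hat E^\omega\!\Bigl[\sum_{k=0}^{H(y)-1}\mathbf{1}\{\omega(S_k)=0\}\Bigr]\Bigr].
\]
To conclude \eqref{item:diff_ql2} one must promote the crude bound $\mathbf{1}\{\omega(S_k)=0\}\leq 1$ (which would give $\lesssim (q-p)/q\cdot\|y\|_1$, not the desired linear-in-$(q-p)$ rate) to the $q$-scaled ballistic estimate
\[
\E_q\!\Bigl[\hat E^\omega\!\Bigl[\sum_{k=0}^{H(y)-1}\mathbf{1}\{\omega(S_k)=0\}\Bigr]\Bigr] \;\leq\; \Cr{qlyap}\,q\,\|y\|_1 \qquad (q\leq q_0<1),
\]
i.e.\ that the polymer visits zero-potential sites at a rate proportional to their density~$q$. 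I expect this to be the principal technical obstacle: it is a Green's-function-type statement for the quenched polymer in the Bernoulli environment, and the restriction $q\leq q_0<1$ is essential here, since the positive density of $1$'s supplies the uniform drift preventing the polymer from lingering disproportionately on the sparse zero sites. Once this estimate is in hand, dividing by $n$ with $y=nx$ and letting $n\to\infty$ completes the proof.
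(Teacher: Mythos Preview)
Your Part~\eqref{item:diff_ql1} is essentially correct and matches the paper's approach: your uniform-coupling telescoping is exactly Russo's formula in integral form, and your per-flip lower bound $-\log\hat E^{\omega^{(i)}}[e^{-N_{z_i}}]\geq(1-e^{-1})\hat Q^{\omega^{(i)}}(z_i\in V)$ is the same estimate the paper uses. One technicality you skipped: $a(0,y,\omega)$ depends on the states of infinitely many sites, so Russo's formula (or your telescoping) does not apply directly. The paper handles this by first restricting to paths staying in $[-N,N]^d$, applying Russo there, and then letting $N\to\infty$; you should do likewise. Also, the paper avoids your monotonicity step by noting that Russo's formula produces, at each site $z$, the full expectation $\E_r[a_N(0,y,\omega_z^1)-a_N(0,y,\omega_z^0)]$ rather than a conditional one, and that \emph{both} representations (under $\omega(z)=1$ and $\omega(z)=0$) are bounded below by $(1-e^{-1})\tilde P_{N,\omega}^{0,y}(\ell_z\geq 1)$.

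Your Part~\eqref{item:diff_ql2} has a genuine gap: you reduce to the estimate $\E_q[\hat E^\omega[\sum_{k<H(y)}\1{\omega(S_k)=0}]]\leq Cq\|y\|_1$ but do not prove it, and this is where essentially all the work lies. The paper does \emph{not} proceed via your Jensen/coupling bound. Instead it stays with Russo's formula and bounds the derivative $-\frac{d}{dr}\E_r[a_N(0,y,\omega)]$ directly. The per-site flip cost is controlled by $|\log(e_N(0,y,\omega_z)/e_N(0,y,\omega))|\leq 4\psi_d(z,\omega)\tilde P_{N,\omega}^{0,y}(H(z)<H(y))$, where $\psi_d(z,\omega)$ is the reciprocal escape probability from $z$ under the killed walk (Lemma~3.2). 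Summing over $z$ then reduces to bounding $\sum_{z\in\mathcal A}\psi_d(z,\omega)$ under the polymer measure. In $d\geq 3$ transience gives $\psi_d\leq P^0(H^+(0)=\infty)^{-1}$ and the result follows from Zerner's bound $\E_r[\tilde E[\#\mathcal A]]\leq C\|y\|_1$. In $d=1,2$ the quantity $\psi_d(z,\omega)$ is unbounded; the paper controls it by the size of a renormalized ``open cluster'' around $z$ (Lemma~3.3) and then invokes a Fontes--Newman lattice-animal estimate to show that these cluster sizes, summed over the polymer's range, remain $O(\#\mathcal A)$ in expectation.

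Your ``$q$-scaled ballistic estimate'' is plausible but is \emph{not} the statement the paper establishes, and proving it would require the same local-time control (i.e.\ bounding $\hat E^\omega[N_z]$ on $\{\omega(z)=0\}$, which again involves $\psi_d(z,\omega)$ and hence the low-dimensional cluster machinery). Note also that your ``crude bound'' aside presumes $\E_q[\hat E^\omega[H(y)]]\lesssim\|y\|_1$; Zerner's argument gives this for the range $\#\mathcal A(0,y)$, not for $H(y)$ itself, so even that step is not free in $d=1,2$.
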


\begin{thm}\label{thm:diff_alyap}
The following results hold for the annealed Lyapunov exponent:
\begin{enumerate}
	\item\label{item:diff_al1}
		We obtain the statement of part~\eqref{item:diff_ql1} in Theorem~\ref{thm:diff_qlyap} with $\alpha_p(\cdot)$ and $\alpha_q(\cdot)$ replaced by $\beta_p(\cdot)$ and $\beta_q(\cdot)$, respectively.
	\item\label{item:diff_al2}
		Let $0<q_0<1$.
		Then, there exists a constant $0<\Cl{alyap}<\infty$ (which may depend on $d$ and $q_0$) such that for all $0<p<q \leq q_0$,
		\begin{align}\label{eq:diff_aupper}
			\sup_{x \in \R^d \setminus \{0\}}
			\frac{\beta_p(x)-\beta_q(x)}{\|x\|_1} \leq \Cr{alyap}(\log q-\log p).
		\end{align}
		In particular, if $d \geq 3$, then the right side of \eqref{eq:diff_aupper} can be replaced by $\Cr{alyap}(q-p)$.
\end{enumerate}
\end{thm}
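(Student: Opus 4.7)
The starting point is the annealed integration-out identity: conditioning on the path and using the Bernoulli structure gives, for any $y\in\Z^d$,
\begin{align*}
\E_r[e(0,y,\omega)] = E^0\Biggl[\prod_{z\in\Z^d}\bigl(r+(1-r)e^{-N_z}\bigr)\,\1{\{H(y)<\infty\}}\Biggr],
\end{align*}
where $N_z:=|\{0\leq k<H(y):S_k=z\}|$ and only sites in the range $\mathcal R:=\{z:N_z\geq 1\}$ produce nontrivial factors. Since $\|S_\cdot\|_1$ changes by $\pm 1$ at each step and therefore must take every integer value between $0$ and $\|y\|_1$, one always has $|\mathcal R|\geq\|y\|_1$ on $\{H(y)<\infty\}$.

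For part~\eqref{item:diff_al1}, the plan is a site-wise comparison. A mean-value computation in $r$ gives, for any $u\in[0,1]$ and $0<p<q<1$,
\begin{align*}
\log\frac{q+(1-q)u}{p+(1-p)u} = (q-p)\,\frac{1-u}{\xi+(1-\xi)u}
\end{align*}
for some $\xi\in(p,q)$. Since $\xi+(1-\xi)u$ is a convex combination of numbers in $[0,1]$ and hence at most $1$, the last fraction is $\geq 1-u$. For $z\in\mathcal R$ one has $u=e^{-N_z}\leq e^{-1}$, so each site contributes at least $(q-p)(1-e^{-1})$ to $\log\E_q-\log\E_p$. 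Multiplying over $z\in\mathcal R$ and using $|\mathcal R|\geq\|nx\|_1$ gives
\begin{align*}
\E_p[e(0,nx,\omega)] \leq e^{-(q-p)(1-e^{-1})\|nx\|_1}\,\E_q[e(0,nx,\omega)],
\end{align*}
from which part~\eqref{item:diff_al1} follows by taking $-n^{-1}\log$ and letting $n\to\infty$.

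For part~\eqref{item:diff_al2} in general dimension, the opposite inequality is required. A direct computation shows that $u\mapsto(p+(1-p)u)/(q+(1-q)u)$ is monotone increasing on $[0,1]$ with infimum $p/q$ at $u=0$, so
\begin{align*}
\prod_{z\in\mathcal R}\bigl(p+(1-p)e^{-N_z}\bigr) \geq (p/q)^{|\mathcal R|}\prod_{z\in\mathcal R}\bigl(q+(1-q)e^{-N_z}\bigr).
\end{align*}
Since $p/q\leq 1$, this is useful only after restricting to paths with controlled range. I plan to intersect with $\{|\mathcal R|\leq K\}$ for $K:=Cn\|x\|_1$, with $C=C(d,q_0)$ chosen large enough so that $\E_q[e(0,nx,\omega)\,\1{\{|\mathcal R|>K\}}]\leq\tfrac{1}{2}\E_q[e(0,nx,\omega)]$; this uses the crude bound $r+(1-r)e^{-N_z}\leq r+(1-r)e^{-1}<1$ on $\mathcal R$ together with the single-geodesic lower bound $\E_q[e(0,nx,\omega)]\geq(2d)^{-\|nx\|_1}(q+(1-q)e^{-1})^{\|nx\|_1}$. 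Combining yields $\E_p[e(0,nx,\omega)]\geq\tfrac{1}{2}(p/q)^K\E_q[e(0,nx,\omega)]$, and taking $-n^{-1}\log$ gives $\beta_p(x)-\beta_q(x)\leq C\|x\|_1(\log q-\log p)$.

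The refinement to the linear bound $(q-p)\|x\|_1$ when $d\geq 3$ is the main obstacle. The logarithm above stems from the worst-case per-site ratio $p/q$, which is sharp only when $N_z\to\infty$; in a transient dimension one expects $N_z$ to concentrate on small values under the tilted annealed measure $d\E_r^{*}\propto\prod_z(r+(1-r)e^{-N_z})\,dP^0$. The cleanest route is to differentiate,
\begin{align*}
\beta_p(x)-\beta_q(x) = \int_p^q\lim_{n\to\infty}\frac{1}{n}\,\E_r^{*}\Biggl[\sum_{z\in\mathcal R}\frac{1-e^{-N_z}}{r+(1-r)e^{-N_z}}\Biggr]dr,
\end{align*}
and then to split $\mathcal R$ into $\{N_z\leq L\}$ (on which the summand is bounded by $(e^L-1)/(1-q_0)$) and $\{N_z>L\}$ (on which only the crude bound $1/r$ is available). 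The hard step is to use the bounded Green's function of the simple random walk in $d\geq 3$ to show that for a suitable fixed $L=L(d,q_0)$ the second piece contributes $o(n)$ under $\E_r^{*}$ uniformly in $r\in(0,q_0]$; once this is done the integrand is $O(\|x\|_1)$ uniformly in $r$, and integrating over $[p,q]$ yields the desired linear-in-$(q-p)$ estimate.
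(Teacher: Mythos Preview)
Your arguments for part~(1) and for the general $(\log q-\log p)$ bound in part~(2) are correct and more direct than the paper's. The paper instead differentiates: it shows
\[
-\frac{d}{dr}b_r(0,y)=\E_r^{*}\Biggl[\sum_{z\in\mathcal R}\frac{1-e^{-N_z}}{r+(1-r)e^{-N_z}}\Biggr]
\]
and then bounds this below by $(1-e^{-1})\|y\|_1$ and above by $r^{-1}\E_r^{*}[|\mathcal R|]$, combined with a Zerner-type estimate $\E_r^{*}[|\mathcal R|]\leq C(d,q_0)\|y\|_1$. Your pathwise product comparison and your restriction to $\{|\mathcal R|\leq K\}$ reach the same conclusions without differentiating, which is arguably cleaner.

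The $d\geq 3$ refinement, however, has a genuine gap. Splitting into $\{N_z\leq L\}$ and $\{N_z>L\}$ and applying the crude bound $1/r$ on the second piece cannot give an estimate uniform in $r\in(0,q_0]$: for every fixed $L$ one has $\E_r^{*}[\#\{z:N_z>L\}]\asymp c_L\|y\|_1$ with $c_L>0$ bounded away from zero uniformly in $r$ (already at $r=0$, where the tilted law is $\propto e^{-H(y)}\,dP^0$, each site on the path is revisited via a short loop with positive probability, and this probability does not vanish as $r\downarrow 0$). Consequently $r^{-1}\E_r^{*}[\#\{z:N_z>L\}]$ diverges as $r\downarrow 0$ regardless of $L$. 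The bounded Green's function in $d\geq 3$ only makes $c_L$ decay in $L$; it does not manufacture the missing factor of $r$ that your splitting would require.

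The paper's route avoids this through a cancellation specific to the annealed structure. Decomposing the derivative over the value $\ell=N_z$ and applying the strong Markov property at the successive returns to $z$, one isolates the full $\omega(z)$-dependence of the Gibbs weight on paths with $N_z=\ell$ as the single factor $e^{-\ell\omega(z)}$; integrating this over $\omega(z)$ produces exactly $r+(1-r)e^{-\ell}$, which cancels the denominator. What survives is at most $e\sum_{\ell\geq 1}P^0(H^+(0)<\infty)^{\ell-1}$ times $\E_r^{*}[|\mathcal R|]\leq C\|y\|_1$, and the geometric series converges precisely because the walk is transient in $d\geq 3$. This algebraic cancellation---not a tail bound on local times---is the idea your sketch is missing.
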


\begin{rem}\label{rem:annealed}
Le~\cite{Le17} obtained the (ordinary) continuity of the quenched and annealed Lyapunov exponents in the law of potential.
From this point of view, Theorems~\ref{thm:diff_qlyap} and \ref{thm:diff_alyap} provide stronger results than the (ordinary) continuity in the case where the potential is Bernoulli distributed.
Moreover, \cite{Le17} sometimes requires that the potential is bounded away from zero in the low dimensional case ($d=1,2$).
Since the Bernoulli potential is not bounded away from zero, Theorems~\ref{thm:diff_qlyap} and \ref{thm:diff_alyap} also exhibit the continuity of the quenched and annealed Lyapunov exponents which \cite{Le17} does not deal with.
\end{rem}

\begin{rem}
At first, we believed that the right side of \eqref{eq:diff_aupper} could easily be replaced by $\Cr{alyap}(q-p)$ in $d=1,2$ by using the same argument taken in the proof of part~\eqref{item:diff_ql2} in Theorem~\ref{thm:diff_qlyap}.
Unfortunately, that argument does not work well, and this may suggest that in $d=1,2$, the upper and lower bounds stated in Theorem~\ref{thm:diff_alyap} are not correct.
However, if the range of $p$ and $q$ is restricted to a closed interval $[p_0,q_0] \subset (0,1)$, then the mean value theorem implies that there exists a constant $0<\Cr{alyap}'<\infty$ (which may depend on $d$, $p_0$ and $q_0$) such that for all $p_0 \leq p<q \leq q_0$, the right side of \eqref{eq:diff_aupper} is bounded from above by $\Cr{alyap}'(q-p)$ in $d=1,2$.
Consequently, even if $d=1,2$, in the case where $p_0 \leq p<q \leq q_0$, the quantity $q-p$ dominates the upper and lower bounds between the annealed Lyapunov exponents.
\end{rem}

Let us finally comment on earlier works related to the above results.
Recently, the coincidence of the quenched and annealed Lyapunov exponents has been studied positively:
Flury~\cite{Flu08} and Zygouras~\cite{Zyg09} proved that the quenched and annealed Lyapunov exponents coincide in $d \geq 4$ and the low disorder regime.
Furthermore, Wang~\cite{Wan01,Wan02} and Kosygina et al.~\cite{KosMouZer11} studied the asymptotic behavior of the quenched and annealed Lyapunov exponents as the potential tends to zero.

On the other hand, this paper treats the comparison between (quenched/annealed) Lyapunov exponents for different laws of the potential.
As mentioned at the beginning of this subsection, the strict comparison for the Lyapunov exponents is obtained in \cite{Kub20_arXiv}, and the goal of this article is to estimate more precisely differences between quenched and annealed Lyapunov exponents when the law of potential is restricted to the Bernoulli distribution.
Even if we focus on the Bernoulli setting, it is not easy to analyze the Lyapunov exponent precisely.
Actually, there are not so many researches related to our topic:
In \cite{CerDem21_arXiv,Dem21} and \cite{Dem20}, the Lipschitz continuity is obtained for the so-called time constant of the first passage percolation on $\Z^d$ and isoperimetric constant of the supercritical percolation cluster (which are counterparts of the Lyapunov exponent), respectively.
The aim of \cite{WuFen09} is to derive a (nontrivial) lower bound for the difference between the time constants.
This is a similar topic to part~\eqref{item:diff_ql1} in Theorem~\ref{thm:diff_qlyap}, and the key idea for its proof is actually inspired by \cite{WuFen09}.
However, the counterpart of the travel cost used there takes only nonnegative integer values, and this is not always true for the travel cost.
Hence, we have to take a different approach from \cite{WuFen09}.
Fortunately, this modified approach is also useful for the proof of part~\eqref{item:diff_ql2} in Theorem~\ref{thm:diff_qlyap}.

\subsection{Organization of the paper}
Let us describe how the present article is organized.
In Section~\ref{sect:qlower_lyap}, we prove part~\eqref{item:diff_ql1} of Theorem~\ref{thm:diff_qlyap}, which gives a lower bound for the difference between quenched Lyapunov exponents.
Note that the quenched Lyapunov exponent is described by the expectation of the quenched travel cost.
Hence, our main task is to estimate differences between expectations of quenched travel costs from below.
For this purpose, we use Russo's formula for the independent Bernoulli site percolation on $\Z^d$.
Russo's formula enables us to differentiate the expectation for the quenched travel cost with respect to parameter $r$ (see Proposition~\ref{prop:Russo} below).
Then, a standard calculation shows that the obtained derivative is bounded from above uniformly in $r$.
This implies our desired conclusion since the expectation of the quenched travel cost is decreasing in $r$.

Section~\ref{sect:qupper} is devoted to the proof of part~\eqref{item:diff_ql2} in Theorem~\ref{thm:diff_qlyap}, which gives an upper bound for the difference between quenched Lyapunov exponents.
Considering the proof of part~\eqref{item:diff_ql1} in Theorem~\ref{thm:diff_qlyap}, it suffices to estimate the derivative of the expectation for the quenched travel cost from below uniformly in parameter $r$.
To this end, we divide the proof into two cases: Sections~\ref{ssect:high} and \ref{ssect:low} treat the high dimensional case ($d \geq 3$) and the low dimensional case ($d=1,2$), respectively.
In the high dimensional case, the transient of the simple random walk strongly effectively works, and the derivative of the expectation for the quenched travel cost is bounded from below uniformly in $r$.
On the other hand, the simple random walk is recurrent in the low dimensional case.
This is the reason why the proof is divided into two cases, and we need some more analysis to estimate the derivative from below uniformly in $r$.
Actually, in the low dimensional case, the derivative is related to the range of the simple random walk equipped with random connected subsets of $\Z^d$.
Hence, our efforts are focused on estimating this range under the path measure defined as in \eqref{eq:qpathmeas}.
To this end, we rely on the analysis of lattice animals developed by Fontes--Newman~\cite{FonNew93} and Scholler~\cite{Sch14}.
Roughly speaking, this analysis guarantees that even if $d=1,2$, the potential makes the simple random walk transient under the path measure.
Therefore, similar to the high dimensional case, we can also estimate the derivative from below uniformly in $r$.

The aim of Section~\ref{sect:abound} is to prove Theorem~\ref{thm:diff_alyap}, which gives both lower and upper bounds for the difference between annealed Lyapunov exponents.
Similar to the above, our task here is to estimate the derivative of the annealed travel cost from above and below.
By differentiating under the integral sign, the derivative can be written as the expectation of some random variable with respect to the path measure defined as in \eqref{eq:apathmeas} (see Lemma~\ref{lem:derivative_b} below).
This random variable consists of the function $f(t):=(1-t)/\{ r+t(1-r) \}$, $0 \leq t \leq 1$, and the property of $f(t)$ implies that the derivative is bounded from above uniformly in $r$ and has a lower bound of order $r^{-1}$.
In particular, in the high dimensional case, the derivative can be bounded from below uniformly in $r$ due to transience of the simple random walk (see Lemma~\ref{lem:l_bound} below).

In Section~\ref{sect:LDP}, we comment on the quenched and annealed large deviation principles for the simple random walk in the Bernoulli potential.
It is known that their rate functions are described by the Lyapunov exponents for the potentials $\omega+\lambda=(\omega(x)+\lambda)_{x \in \Z^d}$, $\lambda \geq 0$.
Furthermore, Theorems~\ref{thm:diff_qlyap} and \ref{thm:diff_alyap} are also established for these Lyapunov exponents since the same arguments as in Sections~\ref{sect:qlower_lyap}--\ref{sect:abound} work, and we can estimate differences between quenched and annealed rate functions.

We close this section with some general notation.
Write $\|\cdot\|_1$ and $\|\cdot\|_\infty$ for the $\ell^1$ and $\ell^\infty$-norms on $\R^d$.
Throughout the paper, $c$, $c'$ and $C_i$, $i=1,2,\dots$, denote some constants with $0<c,c',C_i<\infty$.

\section{Lower bound for the quenched Lyapunov exponent}\label{sect:qlower_lyap}
The aim of this section is to prove part~\eqref{item:diff_ql1} of Theorem~\ref{thm:diff_qlyap}.
The key tool here is Russo's formula for the independent Bernoulli site percolation on $\Z^d$.
Roughly speaking, by applying Russo's formula to some events with respect to the quenched travel cost, we derive a lower bound for the differences between quenched Lyapunov exponents.
However, Russo's formula is not directly applicable because the quenched travel cost depends on the states of infinitely many sites.
To overcome this problem, we first introduce a modification of the quenched travel cost depending only on the states of finitely many sites.

Let $V$ be a subset of $\R^d$ and denote by $T(V)$ the exit time of the simple random walk from $V$, i.e.,
\begin{align}\label{eq:exit}
	T(V):=\inf\{ k \geq 0:S_k \not\in V \}.
\end{align}
Then, for any $x,y \in \Z^d$ and $N \in \N$, we define the quenched travel cost $a_N(x,y,\omega)$ from $x$ to $y$ restricted to the simple random walk before exiting $[-N,N]^d$ as follows:
\begin{align*}
	a_N(x,y,\omega):=-\log e_N(x,y,\omega),
\end{align*}
where
\begin{align*}
	e_N(x,y,\omega)
	:= E^x\Biggl[ \exp\Biggl\{ -\sum_{k=0}^{H(y)-1}\omega(S_k) \Biggr\} \1{\{ H(y)<T([-N,N]^d) \}} \Biggr]
\end{align*}
if $x \not= y$, and $e_N(x,y,\omega):=1$ otherwise.
Note that $a_N(x,y,\omega)$ depends only on the states of finitely many sites, and some convergence theorems show that for each $0 \leq r \leq 1$,
\begin{align}\label{eq:c}
	\lim_{N \to \infty} \E_r[a_N(x,y,\omega)]
	= \E_r[a(x,y,\omega)].
\end{align}

We need some preparation before applying Russo's formula to the restricted quenched travel cost.
For any $y \in \Z^d$, the path measure $\tilde{P}_{N,\omega}^{0,y}$ is defined by
\begin{align}\label{eq:qpathmeas}
	\frac{d\tilde{P}_{N,\omega}^{0,y}}{dP^0}
	= e_N(0,y,\omega)^{-1} \exp\Biggl\{ -\sum_{k=0}^{H(y)-1}\omega(S_k) \Biggr\}
	\1{\{ H(y)<T([-N,N]^d) \}},
\end{align}
and denote by $\tilde{E}_{N,\omega}^{0,y}$ the expectation with respect to $\tilde{P}_{N,\omega}^{0,y}$.
Next, for any $z \in \Z^d$ and $m \in \N$, let $\ell_z(m)$ be the number of visits to $z$ by the simple random walk up to time $m-1$, i.e.,
\begin{align*}
	\ell_z(m):=\#\{ 0 \leq k<m:S_k=z \}.
\end{align*}

The following proposition is a consequence of Russo's formula for the restricted quenched travel cost, which is also useful to derive an upper bound for differences between quenched Lyapunov exponents.

\begin{lem}\label{prop:Russo}
We have for all $0<r<1$, $y \in \Z^d \setminus \{0\}$ and $N \in \N$ with $N \geq \|y\|_\infty$,
\begin{align*}
	-\frac{d}{dr}\E_r[a_N(0,y,\omega)]
	&= \sum_{z \in \Z^d \cap [-N,N]^d} \Bigl\{ \E_r \Bigl[ \1{\{ \omega(z)=1 \}} \log \tilde{E}_{N,\omega}^{0,y}[e^{\ell_z(H(y))}] \Bigr]\\
	&\hspace{7em}+\E_r\Bigl[ \1{\{ \omega(z)=0 \}}\bigl( -\log \tilde{E}_{N,\omega}^{0,y}[e^{-\ell_z(H(y))}] \bigr) \Bigr] \Bigr\}.
\end{align*}
\end{lem}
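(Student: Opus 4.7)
The approach is a Russo-type differentiation in the parameter of the product Bernoulli law, followed by a deterministic identity that expresses the effect of flipping the single coordinate $\omega(z)$ in terms of the path measure $\tilde{P}_{N,\omega}^{0,y}$.

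First, I would note that since $H(y) < T([-N,N]^d)$ forces the visited sites $S_0,\ldots,S_{H(y)-1}$ to lie in $[-N,N]^d$, the quantity $a_N(0,y,\omega)$ is a bounded function of the finitely many Bernoulli variables $(\omega(z))_{z \in \Z^d \cap [-N,N]^d}$; boundedness holds because $e_N(0,y,\omega) \geq e^{-\|y\|_1}(2d)^{-\|y\|_1}$ by restricting to a fixed shortest path in $[-N,N]^d$. Writing $\omega^{z,s}$ ($s \in \{0,1\}$) for the configuration obtained from $\omega$ by forcing the value at $z$ to equal $s$, a termwise differentiation of the product measure, using only $\P_r(\omega(z)=0)=r$ and $\P_r(\omega(z)=1)=1-r$, yields the Russo-type identity
\begin{align*}
	\frac{d}{dr}\E_r[a_N(0,y,\omega)]
	= \sum_{z \in \Z^d \cap [-N,N]^d}\bigl\{ \E_r[a_N(0,y,\omega^{z,0})] - \E_r[a_N(0,y,\omega^{z,1})] \bigr\}.
\end{align*}
Terms for $z \notin [-N,N]^d$ vanish because $a_N$ does not depend on such $\omega(z)$.

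Second, I would derive the single-site identity
\begin{align*}
	e_N(0,y,\omega^{z,s})
	= e_N(0,y,\omega)\cdot \tilde{E}_{N,\omega}^{0,y}\bigl[ e^{(\omega(z)-s)\ell_z(H(y))} \bigr],\qquad s \in \{0,1\},
\end{align*}
which follows immediately from $\omega^{z,s}(S_k)-\omega(S_k) = (s-\omega(z))\1{\{S_k=z\}}$ and the definition of the path measure $\tilde{P}_{N,\omega}^{0,y}$. Taking logarithms gives $a_N(0,y,\omega^{z,s}) - a_N(0,y,\omega) = -\log\tilde{E}_{N,\omega}^{0,y}[e^{(\omega(z)-s)\ell_z(H(y))}]$.

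Finally, I would split each $z$-summand on the events $\{\omega(z)=0\}$ and $\{\omega(z)=1\}$. On the former, $\omega^{z,0}=\omega$, so only the $s=1$ branch contributes, and after negation of the Russo identity it produces $-\log\tilde{E}_{N,\omega}^{0,y}[e^{-\ell_z(H(y))}]$. On the latter, $\omega^{z,1}=\omega$, so only the $s=0$ branch contributes, producing $\log\tilde{E}_{N,\omega}^{0,y}[e^{\ell_z(H(y))}]$. Re-assembling the two pieces and summing over $z$ gives exactly the formula stated in the lemma. The main obstacle is nothing more than careful sign bookkeeping in this last splitting step; the underlying differentiation is elementary because the relevant configuration space is a finite product of two-point sets on which $a_N$ is uniformly bounded.
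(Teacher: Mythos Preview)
Your proposal is correct and follows essentially the same approach as the paper: apply Russo's formula to the finite-volume quantity $a_N(0,y,\omega)$, then rewrite the single-site flip $a_N(0,y,\omega_z^1)-a_N(0,y,\omega_z^0)$ via the path measure $\tilde{P}_{N,\omega}^{0,y}$ on the events $\{\omega(z)=0\}$ and $\{\omega(z)=1\}$. Your explicit lower bound $e_N(0,y,\omega)\geq e^{-\|y\|_1}(2d)^{-\|y\|_1}$ and the direct termwise differentiation of the product measure make the applicability of Russo's formula more transparent than the paper's brief citation of Grimmett, but the argument is otherwise the same.
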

\begin{proof}
Fix $y \in \Z^d \setminus \{0\}$ and $N \in \N$ with $N \geq \|y\|_\infty$.
Since $A(t)$ is an increasing event defined in terms of the states of sites in $[-N,N]^d$, Russo's formula (see for instance \cite[Theorem~{2.32}]{Gri99_book}) gives that for any $0<r<1$,
\begin{align}\label{eq:Russo}
	-\frac{d}{dr}\E_r[a_N(0,y,\omega)]
	= \sum_{z \in \Z^d \cap [-N,N]^d} \E_r\Bigl[ a_N(0,y,\omega_z^1)-a_N(0,y,\omega_z^0) \Bigr],
\end{align}
where
\begin{align*}
	\omega_z^1(x):=
	\begin{cases}
		1, & \text{if } x=z,\\
		\omega(x), & \text{if } x \not=z,
	\end{cases}
	\qquad
	\omega_z^0(x):=
	\begin{cases}
		0, & \text{if } x=z,\\
		\omega(x), & \text{if } x \not=z.
	\end{cases}
\end{align*}
Note that if $\omega(z)=1$ holds, then
\begin{align*}
	a_N(0,y,\omega_z^1)-a_N(0,y,\omega_z^0)
	= \log \tilde{E}_{N,\omega}^{0,y}\bigl[ e^{\ell_z(H(y))} \bigr].
\end{align*}
Furthermore, in the case where $\omega(z)=0$,
\begin{align*}
	a_N(0,y,\omega_z^1)-a_N(0,y,\omega_z^0)
	= -\log \tilde{E}_{N,\omega}^{0,y}\bigl[ e^{-\ell_z(H(y))} \bigr].
\end{align*}
This implies that the right side of \eqref{eq:Russo} is equal to
\begin{align*}
	&\sum_{z \in \Z^d \cap [-N,N]^d}\Bigl\{ \E_r \Bigl[ \1{\{ \omega(z)=1 \}} \log \tilde{E}_{N,\omega}^{0,y}[e^{\ell_z(H(y))}] \Bigr]\\
	&\hspace{7em} +\E_r\Bigl[ \1{\{ \omega(z)=0 \}}\bigl( -\log \tilde{E}_{N,\omega}^{0,y}[e^{-\ell_z(H(y))}] \bigr) \Bigr] \Bigr\},
\end{align*}
and the proposition follows.
\end{proof}

We are now in a position to show part~\eqref{item:diff_ql1} in Theorem~\ref{thm:diff_qlyap}.

\begin{proof}[\bf Proof of part~(\ref{item:diff_ql1}) in Theorem~\ref{thm:diff_qlyap}]
Let $y \in \Z^d \setminus \{0\}$ and $N \in \N$ with $N \geq \|y\|_\infty$.
Jensen's inequality and the fact that $-\log(1-t) \geq t$ holds for $0 \leq t<1$ imply that for any $z \in \Z^d \cap [-N,N]^d$,
\begin{align*}
	\log\tilde{E}_{N,\omega}^{0,y}[e^{\ell_z(H(y))}]
	&\geq -\log\tilde{E}_{N,\omega}^{0,y}[e^{-\ell_z(H(y))}]\\
	&\geq -\log\Bigl\{ 1-(1-e^{-1})\tilde{P}_{N,\omega}^{0,y}(\ell_z(H(y)) \geq 1) \Bigr\}\\
	&\geq (1-e^{-1})\tilde{P}_{N,\omega}^{0,y}(\ell_z(H(y)) \geq 1).
\end{align*}
Hence, Lemma~\ref{prop:Russo} yields that for all $0<r<1$,
\begin{align*}
	-\frac{d}{dr}\E_r[a_N(0,y,\omega)]
	&\geq\sum_{z \in \Z^d \cap [-N,N]^d} \E_r \Bigl[ (1-e^{-1})\tilde{P}_{N,\omega}^{0,y}(\ell_z(H(y)) \geq 1) \bigr] \Bigr]\\
	&= (1-e^{-1})\, \E_r\Biggl[\tilde{E}_{N,\omega}^{0,y}\biggl[ \sum_{z \in \Z^d \cap [-N,N]^d} \1{\{ \ell_z(H(y)) \geq 1 \}} \biggr] \Biggr]\\
	&\geq (1-e^{-1})\|y\|_1.
\end{align*}
Here the last inequality follows from the fact that $\tilde{P}_{N,\omega}^{0,y} \hyphen \as$, the simple random walk must visit at least $\|y\|_1$ sites before hitting $y$.
Thus, for all $0<r<1$, $y \in \Z^d \setminus \{0\}$ and $N \in \N$ with $N \geq \|y\|_\infty$,
\begin{align*}
	\frac{d}{dr}\bigl\{ \E_r[a_N(0,y,\omega)]+(1-e^{-1})r\|y\|_1 \bigr\} \leq 0,
\end{align*}
which implies that the function in the above braces is decreasing in $r$.
Therefore, for all $0<p<q<1$, $y \in \Z^d \setminus \{0\}$ and $N \in \N$ with $N \geq \|y\|_\infty$,
\begin{align*}
	\E_p[a_N(0,y,\omega)]-\E_q[a_N(0,y,\omega)]
	\geq (1-e^{-1})(q-p)\|y\|_1.
\end{align*}
It follows by \eqref{eq:c} and the definition of the quenched Lyapunov exponent that for all $0<p<q<1$ and $x \in \Z^d \setminus \{0\}$,
\begin{align*}
	\frac{\alpha_p(x)-\alpha_q(x)}{\|x\|_1} \geq (1-e^{-1})(q-p).
\end{align*}
Note that the right side above does not depend on $x$, and $\alpha_p(\cdot)$ and $\alpha_q(\cdot)$ are norms on $\R^d$.
Consequently, the above inequality can be easily extended to all $x \in \R^d \setminus \{ 0 \}$, and the proof is complete.
\end{proof}

\section{Upper bound for the quenched Lyapunov exponent}\label{sect:qupper}
In this section, for a fixed $q_0 \in (0,1)$, we prove part~\eqref{item:diff_ql2} of Theorem~\ref{thm:diff_qlyap}.
The proof is not so long once a proposition is established (see Proposition~\ref{prop:rankone} below).

\begin{proof}[\bf Proof of part~(\ref{item:diff_ql2}) in Theorem~\ref{thm:diff_qlyap}]
Considering the proof of part~\eqref{item:diff_ql1} in Theorem~\ref{thm:diff_qlyap}, our task is to show that there exists a constant $c$ (which depends only on $d$ and $q_0$) such that for all $0<r \leq q_0$, $y \in \Z^d \setminus \{0\}$ and $N \in \N$ with $N \geq \|y\|_\infty$,
\begin{align}\label{eq:up2}
	-\frac{d}{dr}\E_r[a_N(0,y,\omega)] \leq c\|y\|_1.
\end{align}
To this end, note that for each $z \in \Z^d$ and the potential $\omega$,
\begin{align*}
	\frac{e_N(0,y,\omega_z)}{e_N(0,y,\omega)}=
	\begin{cases}
		\tilde{E}_{N,\omega}^{0,y}[e^{\ell_z(H(y))}], & \text{if } \omega(z)=1,\\[0.5em]
		\tilde{E}_{N,\omega}^{0,y}[e^{-\ell_z(H(y))}], & \text{if } \omega(z)=0,
	\end{cases}
\end{align*}
where $\omega_z$ is the potential which agrees with $\omega$ on all sites except for $z$, i.e., for $x \in \Z^d$,
\begin{align*}
\omega_z(x):=
\begin{cases}
	1-\omega(z), & \text{if } x=z,\\
	\omega(x), & \text{if } x \not=z.
\end{cases}
\end{align*}
Combining this and Proposition~\ref{prop:Russo}, one has
\begin{align}\label{eq:Russo_upper}
	-\frac{d}{dr}\E_r[a_N(0,y,\omega)]
	\leq \sum_{z \in \Z^d \cap [-N,N]^d}
	\E_r \biggl[ \biggl| \log\frac{e_N(0,y,\omega_z)}{e_N(0,y,\omega)} \biggr| \biggr],
\end{align}
and the following proposition is the key to estimating the above sum.

\begin{prop}\label{prop:rankone}
There exists a constant $\Cl{rankone}$ (which depends only on $d$ and $q_0$) such that for all $0<r \leq q_0$, $y \in \Z^d \setminus \{0\}$ and $N \in \N$ with $N \geq \|y\|_\infty$,
\begin{align*}
	\sum_{z \in \Z^d \cap [-N,N]^d}\E_r \biggl[ \biggl| \log\frac{e_N(0,y,\omega_z)}{e_N(0,y,\omega)} \biggr| \biggr]
	\leq \Cr{rankone}\,\E_r\Bigl[ \tilde{E}_{N,\omega}^{0,y}[\#\mathcal{A}(0,y)] \Bigr],
\end{align*}
where $\mathcal{A}(0,y):=\{ S_k:0 \leq k<H(y) \}$.
\end{prop}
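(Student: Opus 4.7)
The plan is to exploit that altering $\omega(z)$ is a rank-one perturbation of the underlying Green function, giving an explicit identity for the log-ratio that reduces the desired bound to controlling a single scalar quantity. A useful preliminary is that $\bigl|\log(e_N(0,y,\omega_z)/e_N(0,y,\omega))\bigr|$ does not depend on the value of $\omega(z)$: flipping $\omega(z)$ simply interchanges $\omega$ and $\omega_z$ and replaces the ratio by its reciprocal, leaving the absolute log unchanged. I may therefore assume $\omega(z)=1$ throughout the computation.

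First I would decompose $e_N(0,y,\omega)$ via the strong Markov property at the first visit to $z$. Writing $a$ for the cost restricted to walks reaching $y$ before touching $z$, and $b$ for the cost restricted to walks reaching $z$ before touching $y$ (both constrained to remain in $[-N,N]^d$), one has $e_N(0,y,\omega) = a + b\,e_N(z,y,\omega)$. Expanding $e_N(z,y,\omega)$ as a geometric series in the number of returns to $z$ yields
\begin{equation*}
e_N(z,y,\omega) = \frac{e^{-\omega(z)}\sigma}{1 - e^{-\omega(z)}\rho},
\end{equation*}
where $\sigma$ is the weight of an excursion from $z$ reaching $y$ without returning to $z$, and $\rho$ is the weight of an excursion returning to $z$ before hitting $y$ or exiting $[-N,N]^d$; neither depends on $\omega(z)$. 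Substituting into the ratio and simplifying algebraically leads to the closed form
\begin{equation*}
\biggl|\log\frac{e_N(0,y,\omega_z)}{e_N(0,y,\omega)}\biggr| = \log\biggl(1 + \frac{(e-1)(1-\theta)}{1-\rho}\biggr),
\end{equation*}
where $1-\theta = \tilde{P}^{0,y}_{N,\omega}(\ell_z(H(y))\ge 1)$ (for the configuration with $\omega(z)=1$), so that both $\theta$ and $\rho$ are functions of $\omega|_{\Z^d\setminus\{z\}}$ alone.

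I then apply $\log(1+x)\le x$ to convert this into
\begin{equation*}
\biggl|\log\frac{e_N(0,y,\omega_z)}{e_N(0,y,\omega)}\biggr| \le \frac{(e-1)(1-\theta)}{1-\rho}.
\end{equation*}
The factor $1-\theta$ is (up to the fixed choice $\omega(z)=1$) the probability that the walk visits $z$, and summing $\tilde{P}^{0,y}_{N,\omega}(\ell_z(H(y))\ge 1)$ over $z\in\Z^d\cap[-N,N]^d$ reproduces $\tilde{E}^{0,y}_{N,\omega}[\#\mathcal{A}(0,y)]$ on the right-hand side, as desired.

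The main obstacle is the residual factor $(1-\rho)^{-1}$, which represents the mean number of visits to $z$ conditional on one visit and can in principle be arbitrarily large, especially in $d=1,2$ where the simple random walk is recurrent, so no pointwise bound in $\omega$ will suffice. To overcome this, one uses crucially the hypothesis $r\le q_0<1$: under $\P_r$ each site carries the killing weight $e^{-\omega(x)}=e^{-1}$ with probability $1-r\ge 1-q_0>0$, so after integrating against $\P_r$ the effective return probability of the weighted walk is bounded away from $1$, uniformly in $r\in(0,q_0]$. Making this quantitative---i.e., comparing $\E_r[(1-\theta)/(1-\rho)]$ to $\E_r[\tilde{P}^{0,y}_{N,\omega}(\ell_z(H(y))\ge 1)]$ with a constant depending only on $d$ and $q_0$---is the technical crux of the argument, after which summing over $z$ closes the proof with $\Cr{rankone}$ depending only on $d$ and $q_0$.
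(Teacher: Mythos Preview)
Your reduction is essentially the paper's Lemma~\ref{lem:psi}: the identity
\[
\biggl|\log\frac{e_N(0,y,\omega_z)}{e_N(0,y,\omega)}\biggr|
= \log\Bigl(1+\tfrac{(e-1)(1-\theta)}{1-\rho}\Bigr)
\le \frac{(e-1)(1-\theta)}{1-\rho}
\]
is a cleaner version of what the paper obtains there, with your $(1-\rho)^{-1}$ playing the role of (a slightly smaller quantity than) $\psi_d(z,\omega)$.  For $d\ge 3$ this finishes the proof exactly as you say, via the uniform bound $(1-\rho)^{-1}\le P^0(H^+(0)=\infty)^{-1}$.

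The gap is your final paragraph, which is precisely where all the content lies when $d=1,2$.  The sentence ``after integrating against $\P_r$ the effective return probability of the weighted walk is bounded away from $1$'' is not a proof and, as stated, is not even true pointwise or in any averaged sense that directly helps: for $d=1,2$, on the event that $\omega\equiv 0$ on a block of radius $L$ around $z$ (probability $\asymp r^{cL^d}$), one has $(1-\rho)^{-1}\gtrsim L$, so $(1-\rho)^{-1}$ is genuinely unbounded in $\omega$.  Worse, $(1-\theta)$ and $(1-\rho)^{-1}$ are \emph{correlated} through the environment near $z$, so you cannot factor $\E_r[(1-\theta)/(1-\rho)]$, and no appeal to the marginal law of $\rho$ alone will close the argument.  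You have correctly identified the crux but not proposed a mechanism for it.

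The paper's solution is substantially different from anything your last paragraph hints at.  It coarse-grains $\Z^d$ into $R$-boxes (with $R$ chosen so that ``box contains at most one site with $\omega=1$'' is subcritical), proves $\psi_d(z,\omega)\le C(1+\#\mathcal{C}_{\bar z})$ where $\mathcal{C}_{\bar z}$ is the open cluster of the box containing $z$, and then controls
\[
\E_r\Bigl[\tilde E^{0,y}_{N,\omega}\Bigl[\sum_{z\in\mathcal{A}(0,y)}\#\mathcal{C}_{\bar z}\Bigr]\Bigr]
\]
via greedy lattice animal estimates (Fontes--Newman, Scholler).  This is how the correlation between the visit indicator and the return time is handled: the cluster sizes along the walk's range form a lattice-animal sum whose per-site average is controlled uniformly.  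None of this is suggested by ``integrating against $\P_r$''.
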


Since the proof of the above proposition is a little long, for now let us complete the proof of part~\eqref{item:diff_ql2} in Theorem~\ref{thm:diff_qlyap}.
The same argument as in the proof of \cite[Lemma~3]{Zer98a} implies that for any $0<r \leq q_0$, $y \in \Z^d \setminus \{0\}$ and $N \in \N$ with $N \geq \|y\|_\infty$,
\begin{align}\label{eq:eanimal}
	\E_r\Bigl[ \tilde{E}_{N,\omega}^{0,y}[\#\mathcal{A}(0,y)] \Bigr]
	\leq \frac{1+\log(2d)}{-\log\{ e^{-1}+(1-e^{-1})q_0 \}}\|y\|_1.
\end{align}
From \eqref{eq:Russo_upper}, \eqref{eq:eanimal} and Proposition~\ref{prop:rankone}, we conclude that for all $0<r \leq q_0$, $y \in \Z^d \setminus \{0\}$ and $N \in \N$ with $N \geq \|y\|_\infty$,
\begin{align*}
	-\frac{d}{dr}\E_r[a_N(0,y,\omega)]
	\leq \frac{\Cr{rankone}(1+\log(2d))}{-\log\{ e^{-1}+(1-e^{-1})q_0 \}}\|y\|_1,
\end{align*}
and \eqref{eq:up2} follows.
\end{proof}

It remains to prove Proposition~\ref{prop:rankone}.
We divide the proof into two cases: Sections~\ref{ssect:high} and \ref{ssect:low} treat the high dimensional case ($d \geq 3$) and the low dimensional case ($d=1,2$), respectively.

\subsection{High dimensional case}\label{ssect:high}
This subsection is devoted to the proof of Proposition~\ref{prop:rankone} for $d \geq 3$.
First of all, we state the following lemma, which is useful to show Proposition~\ref{prop:rankone} not only for $d \geq 3$ but also for $d=1,2$.

\begin{lem}\label{lem:psi}
Let $d \geq 1$, $y \in \Z^d \setminus \{0\}$ and $N \in \N$ with $N \geq \|y\|_\infty$.
Moreover, define for any $z \in \Z^d$,
\begin{align*}
	\psi_d(z,\omega)
	:= \Biggl( 1-E^z\Biggl[ \exp\Biggl\{ -\sum_{k=1}^{H^+(z)-1}\omega(S_k) \Biggr\} \1{\{ H^+(z)<\infty \}}\Biggr] \Biggr)^{-1},
\end{align*}
where $H^+(z):=\inf\{ k>0:S_k=z \}$.
Then, for all $z \in \Z^d \cap [-N,N]^d$,
\begin{align}\label{eq:psi}
	\bigg| \log\frac{e_N(0,y,\omega_z)}{e_N(0,y,\omega)} \bigg|
	\leq 4\psi_d(z,\omega)\tilde{P}_{N,\omega}^{0,y}(H(z)<H(y)).
\end{align}
\end{lem}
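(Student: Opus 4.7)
The plan is to perform a strong Markov excursion decomposition at $z$, obtain an explicit formula for the ratio $e_N(0,y,\omega_z)/e_N(0,y,\omega)$, and then apply elementary logarithm estimates.

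Partitioning paths according to $k := \ell_z(H(y))$ and applying the strong Markov property at successive visits to $z$ (factoring out the $e^{-\omega(z)}$ contributed by each visit), one obtains
\begin{align*}
e_N(0,y,\omega) = A_0 + \frac{BC\,e^{-\omega(z)}}{1-g\,e^{-\omega(z)}},
\end{align*}
where $A_0$ collects paths $0\to y$ inside $[-N,N]^d$ that avoid $z$, $B$ is the weight of paths $0\to z$ inside $[-N,N]^d$ that avoid $y$, $C$ is the weight of paths $z\to y$ inside $[-N,N]^d$ that avoid further visits to $z$, and
\begin{align*}
g := E^z\Bigl[ \exp\Bigl\{-\sum_{k=1}^{H^+(z)-1}\omega(S_k)\Bigr\}\1{\{H^+(z)<H(y)\wedge T([-N,N]^d)\}}\Bigr].
\end{align*}
Crucially, none of $A_0,B,C,g$ depends on $\omega(z)$, since the interior of each of the subpaths entering these four quantities strictly avoids $z$.

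Writing $q:=\tilde P_{N,\omega}^{0,y}(H(z)<H(y))$ and using the formula above to express $q$ in terms of $A_0, BC$ and $g$, a short algebraic simplification yields
\begin{align*}
\frac{e_N(0,y,\omega_z)}{e_N(0,y,\omega)} =
\begin{cases}
1+\dfrac{q(e-1)}{1-g}, & \omega(z)=1,\\[0.3em]
1-\dfrac{q(e-1)}{e-g}, & \omega(z)=0,
\end{cases}
\end{align*}
consistent with the identifications with $\tilde E_{N,\omega}^{0,y}[e^{\pm\ell_z(H(y))}]$ noted just before the lemma.

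Combining $\log(1+x)\le x$ (for the first case) with $-\log(1-x)\le x/(1-x)$ (for the second), together with the elementary inequality $e-g-q(e-1)\ge 1-g$ valid for $q\le 1$, both cases collapse to
\begin{align*}
\Bigl|\log\tfrac{e_N(0,y,\omega_z)}{e_N(0,y,\omega)}\Bigr|\le \frac{q(e-1)}{1-g}.
\end{align*}
The restricted excursion sum $g$ is majorised by the unrestricted one, i.e.\ $g\le 1-\psi_d(z,\omega)^{-1}$, so $1/(1-g)\le\psi_d(z,\omega)$, giving the claim with effective constant $e-1$ (the factor $4$ in the statement is generous). The main obstacle is the bookkeeping in the excursion decomposition---checking that each of $A_0, B, C$ and $g$ is really a function of $\omega$ off $z$ so that $\omega(z)$ separates cleanly out of the denominator---after which everything reduces to routine manipulations.
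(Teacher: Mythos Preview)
Your proof is correct and in fact yields the sharper constant $e-1$ in place of the paper's $4$. The excursion decomposition is sound: the quantities $A_0,B,C,g$ indeed depend only on $(\omega(x))_{x\neq z}$ since every subpath whose weight they record has $z$ only as an (excluded) endpoint, and the geometric series converges because $g<1$ (the walk can reach $y$ or exit the box before returning to $z$ with positive probability). Your two displayed formulas for the ratio are exactly the identities $\tilde E_{N,\omega}^{0,y}[e^{\pm\ell_z(H(y))}]$ recorded in the paper just before the proposition, so the link to Russo's formula is the same; the logarithm bounds and the domination $g\le 1-\psi_d(z,\omega)^{-1}$ are straightforward.

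The paper takes a softer route that avoids computing the ratio exactly. For $\omega(z)=1$ it bounds $e_N(0,y,\omega_z)/e_N(0,y,\omega)\le 1+(e_N(z,y,\omega_z)/e_N(z,y,\omega))\,q$ and then estimates the inner ratio by $e\,\psi_d(z,\omega)$ via a single excursion bound, giving constant $e$. For $\omega(z)=0$ it quotes a minimum-of-two-terms estimate from Zerner (bounding $a_N(0,y,\omega_z)-a_N(0,y,\omega)$ by $\min\{-\log\tilde P_{N,\omega}^{0,y}(H(y)\le H(z)),\,1+\log\psi_d\}$) and then case-splits on whether $q\le 1/2$; this is where the factor $4$ enters. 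Your explicit computation replaces both the citation to Zerner and the case split by a single closed-form identity, at the cost of a bit more bookkeeping in the decomposition. Either approach suffices for the application, since the constant is absorbed into $\Cr{rankone}$.
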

\begin{proof}
We first treat the case where $z \in \Z^d \cap [-N,N]^d$ satisfies $\omega(z)=1$.
The definition of $\omega_z$ and the strong Markov property show that
\begin{align*}
	1 \leq \frac{e_N(0,y,\omega_z)}{e_N(0,y,\omega)}
	\leq 1+\frac{e_N(z,y,\omega_z)}{e_N(z,y,\omega)}\tilde{P}_{N,\omega}^{0,y}(H(z)<H(y)).
\end{align*}
We use the strong Markov property again to obtain that
\begin{align*}
	&e_N(z,y,\omega_z)\\
	&\leq \psi_d(z,\omega)E^z\Biggl[ \exp\Biggl\{ -\sum_{k=0}^{H(y)-1}\omega_z(S_k) \Biggr\}\1{\{ H(y) \leq H^+(z),\,H(y)<T([-N,N]^d) \}} \Biggr]\\
	&\leq e\psi_d(z,\omega) e_N(z,y,\omega).
\end{align*}
With these observations,
\begin{align*}
	0 \leq \log\frac{e_N(z,y,\omega_z)}{e_N(z,y,\omega)}
	\leq \log\Bigl\{ 1+e\psi_d(z,\omega)\tilde{P}_{N,\omega}^{0,y}(H(z)<H(y)) \Bigr\}.
\end{align*}
Since $t \geq \log(1+t)$ holds for $t \geq 0$, the rightmost side above is not greater than $e\psi_d(z)\tilde{P}_{N,\omega}^{0,y}(H(z)<H(y))$, and \eqref{eq:psi} follows.

We next consider the case where $z \in \Z^d \cap [-N,N]^d$ satisfies $\omega(z)=0$.
Then, the same argument as in the proof of \cite[Lemma~12]{Zer98a} shows that
\begin{align}\label{eq:Zerner}
\begin{split}
	0\leq -\log\frac{e_N(0,y,\omega_z)}{e_N(0,y,\omega)}
	&= a_N(0,y,\omega_z)-a_N(0,y,\omega)\\
	&\leq \min\Bigl\{ -\log\tilde{P}_{N,\omega}^{0,y}(H(y) \leq H(z)),1+\log\psi_d(z,\omega) \Bigr\}.
\end{split}
\end{align}
Note that the fact that $-\log t \leq 2(1-t)$ holds for $1/2 \leq t \leq 1$ implies that on the event $\{ \tilde{P}_{N,\omega}^{0,y}(H(y) \leq H(z)) \geq 1/2 \}$,
\begin{align*}
	-\log\tilde{P}_{N,\omega}^{0,y}(H(y) \leq H(z))
	\leq 2\tilde{P}_{N,\omega}^{0,y}(H(z)<H(y)).
\end{align*}
This, together with $\psi_d(z,\omega) \geq 1$, shows that the rightmost side of \eqref{eq:Zerner} is smaller than or equal to
\begin{align*}
	&2\tilde{P}_{N,\omega}^{0,y}(H(z)<H(y))\1{\{ \tilde{P}_{N,\omega}^{0,y}(H(y) \leq H(z)) \geq 1/2 \}}\\
	&+2(1+\log\psi_d(z,\omega))\tilde{P}_{N,\omega}^{0,y}(H(z)<H(y))\1{\{ \tilde{P}_{N,\omega}^{0,y}(H(y) \leq H(z))<1/2 \}}\\
	&\leq 2(\psi_d(z,\omega)+\log\psi_d(z,\omega))\tilde{P}_{N,\omega}^{0,y}(H(z)<H(y)).
\end{align*}
Therefore, \eqref{eq:psi} immediately follows from the fact that $\log t \leq t$ for $t \geq 1$.
\end{proof}

We are now in a position to prove Proposition~\ref{prop:rankone} for $d \geq 3$.

\begin{proof}[\bf Proof of Proposition~\ref{prop:rankone} for $\boldsymbol{d \geq 3}$]
Since the simple random walk is transient for $d \geq 3$, we have $P^0(H^+(0)=\infty)>0$.
Hence, for all $z \in \Z^d$,
\begin{align}\label{eq:psi3}
	\psi_d(z,\omega) \leq P^0(H^+(0)=\infty)^{-1}<\infty.
\end{align}
This together with Lemma~\ref{lem:psi} yields that for all $0<r \leq q_0$, $y \in \Z^d \setminus \{0\}$ and $N \in \N$ with $N \geq \|y\|_\infty$,
\begin{align*}
	\sum_{z \in \Z^d \cap [-N,N]^d} \E_r \biggl[ \biggl| \log\frac{e_N(0,y,\omega_z)}{e_N(0,y,\omega)} \biggr| \biggr]
	&\leq \sum_{z \in \Z^d \cap [-N,N]^d}\E_r \biggl[ 4\psi_d(z,\omega)\tilde{P}_{N,\omega}^{0,y}(H(z)<H(y)) \biggr]\\
	&\leq 4P^0(H^+(0)=\infty)^{-1} \E_r \Bigl[ \tilde{E}_{N,\omega}^{0,y}[\#\mathcal{A}(0,y)] \Bigr].
\end{align*}
Thus, the proof is complete by taking $\Cr{rankone}:=4P^0(H^+(0)=\infty)^{-1}$.
\end{proof}

\subsection{Low dimensional case}\label{ssect:low}
The aim of this section is to prove Proposition~\ref{prop:rankone} for $d=1,2$.
In this case, the strategy taken in the previous subsection does not work because the simple random walk is recurrent.
Hence, we have to estimate $\psi_d(z,\omega)$ in another way.

Let $p_c=p_c(d) \in (0,1)$ be the critical probability for independent Bernoulli site percolation on $\Z^d$, and fix $R \in 2\N$ such that
\begin{align}\label{eq:subcritical}
	q_0^{R^d}+q_0^{R^d-1}(1-q_0)R^d<p_c
\end{align}
(This is possible since $0<q_0<1$).
We now consider the boxes $\Lambda(v):=Rv+[-R/2,R/2)^d$, $v \in \Z^d$.
These boxes form a partition of $\R^d$, and denote by $\bar{z}$ the (unique) index $v$ such that $z \in \Lambda(v)$.
Furthermore, a site $v$ is said to be \emph{open} if $\Lambda(v)$ has at most one site $x$ such that $\omega(x)=1$, and \emph{closed} otherwise.
Note that, under $\P_r$, the family $(\1{\{ v \text{ is open} \}})_{v \in \Z^d}$ is the independent Bernoulli site percolation on $\Z^d$ with parameter $r^{R^d}+r^{R^d-1}(1-r)R^d$.
This site percolation induces open clusters, which are connected components of open sites.
In particular, for each $v \in \Z^d$, we denote by $\mathcal{C}_v$ the open cluster containing $v$.

Although $\psi_d(z,\omega)$ is bounded for $d \geq 3$, the following lemma says that for $d=1,2$, $\psi_d(z,\omega)$ is dominated by the size of the open cluster containing $\bar{z}$.

\begin{lem}\label{lem:cluster}
Let $d=1,2$.
Then, there exists a constant $\Cl{cluster}$ (which depends only on $d$ and $q_0$) such that for all $z \in \Z^d$,
\begin{align*}
	\psi_d(z,\omega) \leq \Cr{cluster}(1+\#\mathcal{C}_{\bar{z}}).
\end{align*}
\end{lem}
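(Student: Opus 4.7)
The plan is to bound $\psi_d(z,\omega)\le C(1+n)$ (with $n:=\#\mathcal{C}_{\bar z}$) by producing a uniform lower bound of order $(1+n)^{-1}$ on $1-q(z,\omega)$, where
\begin{align*}
q(z,\omega):=E^z\bigl[e^{-\sigma}\1{\{H^+(z)<\infty\}}\bigr],\qquad \sigma:=\sum_{k=1}^{H^+(z)-1}\omega(S_k),
\end{align*}
so that $\psi_d(z,\omega)=1/(1-q(z,\omega))$. Since $d\le 2$, $H^+(z)<\infty$ $P^z$-a.s., and the elementary inequality $1-e^{-t}\ge(1-e^{-1})\1{\{t\ge 1\}}$ reduces matters to lower bounding the probability that the walk's first excursion from $z$ hits a site $x$ with $\omega(x)=1$.

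When $n\ge 1$, I would set $B:=\bigcup_{v\in\mathcal{C}_{\bar z}}\Lambda(v)$, a connected set containing $z$ of $\ell^1$-diameter at most $C(d)Rn$, and apply the strong Markov property at $T(B)$:
\begin{align*}
q(z,\omega)\le P^z(H^+(z)<T(B))+E^z\bigl[e(S_{T(B)},z,\omega)\1{\{T(B)<H^+(z)\}}\bigr].
\end{align*}
The key estimate is that, for every $y\in\partial B:=\{y\in B^c:y\sim B\}$,
\begin{align*}
e(y,z,\omega)\le 1-c_0
\end{align*}
for some $c_0=c_0(d,q_0)>0$. To see this, note that $y\in\Lambda(\bar y)$ with $\bar y\in\Z^d$ adjacent to some $\bar w\in\mathcal{C}_{\bar z}$; since $\bar w$ is open and $\bar y\notin\mathcal{C}_{\bar z}$, maximality of $\mathcal{C}_{\bar z}$ forces $\bar y$ to be closed, so $\Lambda(\bar y)$ contains at least two sites $x$ with $\omega(x)=1$, and $\bar y\ne\bar z$ implies $z\notin\Lambda(\bar y)$. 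A classical finite-box estimate for SRW then yields that, starting from $y$, the walk hits one of those sites in $\Lambda(\bar y)$ before exiting $\Lambda(\bar y)$ with probability $\ge c_1(d,q_0)>0$. Since the walk must exit $\Lambda(\bar y)$ before reaching $z$, this gives $P^y(\sigma'\ge 1)\ge c_1$ with $\sigma':=\sum_{k=0}^{H(z)-1}\omega(S_k)$, and combining with the inequality above (applied to $\sigma'$) and recurrence of SRW in $d\le 2$ delivers $1-e(y,z,\omega)\ge(1-e^{-1})c_1=:c_0$.

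Inserting this uniform bound into the strong-Markov decomposition produces
\begin{align*}
q(z,\omega)\le 1-c_0\,P^z(T(B)<H^+(z))=1-\frac{c_0}{G_B(z,z)},
\end{align*}
where $G_B(z,z)$ denotes the Green function of SRW killed on exiting $B$. I would then bound $G_B(z,z)\le C(d,q_0)(1+n)$ via the electrical-network identity $G_B(z,z)=2d\cdot R_{\mathrm{eff}}(z,B^c)$ together with the Rayleigh single-path upper bound $R_{\mathrm{eff}}(z,B^c)\le\mathrm{dist}(z,B^c)\le\mathrm{diam}(B)\le C(d,q_0)(1+n)$. This yields $\psi_d(z,\omega)\le C(d,q_0)(1+n)$. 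The residual case $n=0$ (i.e., $\bar z$ closed) is settled directly: $\Lambda(\bar z)$ contains a site $x^*\ne z$ with $\omega(x^*)=1$ and $\|x^*-z\|_\infty\le R$, and capacity/gambler's-ruin estimates in $d\le 2$ give $P^z(H(x^*)<H^+(z))\ge c_1(d,q_0)>0$; the same reduction then yields $\psi_d(z,\omega)\le C$.

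The hard part will be the uniform lower bound $1-e(y,z,\omega)\ge c_0$ for $y\in\partial B$. It rests on the two geometric facts that (i) every boundary site of $B$ lies in a closed box (so has at least two $\omega=1$ sites in the same box), and (ii) $z$ lies in a strictly different box from $y$, preventing the walk from bypassing those sites by heading straight to $z$. Once these are isolated, the remaining ingredients are classical finite-box SRW estimates whose constants depend only on $d$ and the fixed box side $R=R(d,q_0)$.
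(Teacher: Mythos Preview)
Your proposal is correct and follows essentially the same route as the paper: the same case split on whether $\bar z$ is closed, the same region $B=\bigcup_{v\in\mathcal{C}_{\bar z}}\Lambda(v)$ (called $\mathcal{O}_z$ there), and the same key geometric observation that every box one exits into from $B$ is closed and does not contain $z$. The only difference worth noting is in the estimate for $P^z(T(B)<H^+(z))$: the paper compares $B$ to the $\ell^1$-ball of radius $\#B$ and quotes the classical escape-probability asymptotics from Lawler, whereas you use the electrical-network identity $G_B(z,z)=2d\,R_{\mathrm{eff}}(z,B^c)$ together with the Rayleigh single-path bound via $\mathrm{diam}(B)$; both yield the required lower bound of order $(1+n)^{-1}$. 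The paper also packages the boundary step slightly differently---introducing the stopping time $\sigma=\inf\{k\ge T(\mathcal{O}_z):\omega(S_k)=1\}$ and forcing the walk to tour the closed box after exit, rather than bounding $e(y,z,\omega)$ for $y\in\partial B$---but this is cosmetic.
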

\begin{proof}
Since the simple random walk is recurrent for $d=1,2$, we have for any $z \in \Z^d$,
\begin{align*}
	\psi_d(z,\omega)
	= \Biggl( 1-E^z\Biggl[ \exp\Biggl\{ -\sum_{k=1}^{H^+(z)-1}\omega(S_k) \Biggr\} \Biggr] \Biggr)^{-1}.
\end{align*}
Let us first treat the case where $\bar{z}$ is closed.
Then, the box $\Lambda(\bar{z})$ contains at least two sites $x$ with $\omega(x)=1$.
Hence, we can find a site $x_0 \in \Lambda(\bar{z})$ such that $x_0 \not= z$ and $\omega(x_0)=1$.
It follows that
\begin{align*}
	&1-E^z\Biggl[ \exp\Biggl\{ -\sum_{k=1}^{H^+(z)-1}\omega(S_k) \Biggr\} \Biggr]\\
	&\geq 1-P^z(H^+(z) \leq H(x_0))-e^{-1}P^z(H^+(z)>H(x_0))\\
	&= (1-e^{-1})P^z(H^+(z)>H(x_0))
		\geq (1-e^{-1})\Bigl( \frac{1}{2d} \Bigr)^{dR}.
\end{align*}
Thus, since $\#\mathcal{C}_{\bar{z}}=0$ in the case where $\bar{z}$ is closed, one has
\begin{align*}
	\psi_d(z,\omega) \leq \frac{(2d)^{dR}}{1-e^{-1}}
	= \frac{(2d)^{dR}}{1-e^{-1}}(1+\#\mathcal{C}_{\bar{z}}).
\end{align*}

We next treat the case where $\bar{z}$ is open.
Then, the same argument as above does not work since the box $\Lambda(\bar{z})$ may not contain any site $x$ with $x \not= z$ and $\omega(x)=1$ (This situation actually occurs when $\omega(z)=1$ and $\omega(x)=0$ for $x \in \Lambda(\bar{z}) \setminus \{z\}$).
To overcome this problem, let us introduce the region $\mathcal{O}_z:=\bigcup_{v \in \mathcal{C}_{\bar{z}}} \Lambda(v)$ and the stopping time
\begin{align*}
	\sigma:=\inf\{ k \geq T(\mathcal{O}_z): \omega(S_k)=1 \},
\end{align*}
where $T(\mathcal{O}_z)$ is the exit time of the simple random walk from $\mathcal{O}_z$ (see \eqref{eq:exit}).
The same computation as in the first case gives
\begin{align}\label{eq:2dim1}
	1-E^z\Biggl[ \exp\Biggl\{ -\sum_{k=1}^{H^+(z)-1}\omega(S_k) \Biggr\} \Biggr]
	\geq (1-e^{-1})P^z(\sigma<H^+(z)).
\end{align}
Noting that the site $v:=\bar{S_{T(\mathcal{O}_z)}}$ is closed and $z \not\in \Lambda(v)$, the event $\{ \sigma<H^+(z) \}$ occurs if after exiting $\mathcal{O}_z$, the simple random walk consecutively visits all the sites in $\Lambda(v)$.
Hence, the strong Markov property shows that
\begin{align}\label{eq:2dim2}
	P^z(\sigma<H^+(z))
	\geq \Bigl( \frac{1}{2d} \Bigr)^{2R^d}P^z(T(\mathcal{O}_z)<H^+(z)).
\end{align}
For the last probability, we use the following estimate on the probability that the simple random walk exits an $\ell^1$-ball before revisiting its starting point (see for instance \cite[(1.20), (1.38) and Theorem~{1.6.6}]{Law91_book}): There exists a constant $c \geq 1$ (which depends only on $d$) such that
\begin{align}\label{eq:Lawler}
	P^0(\tau<H^+(0))^{-1} \leq c \times
	\begin{cases}
		\#\mathcal{O}_z, & \text{if } d=1,\\
		\log(\#\mathcal{O}_z), & \text{if } d=2,
	\end{cases}
\end{align}
where $\tau$ is the exit time for the simple random walk from the $\ell^1$-ball of radius $\#\mathcal{O}_z$ and center $0$, i.e.,
\begin{align*}
	\tau:=\inf\{ k \geq 0:\|S_k\|_1>\#\mathcal{O}_z \}.
\end{align*}
Noting that $P^0(\tau<H^+(0)) \leq P^z(T(\mathcal{O}_z)<H^+(z))$ and $\#\mathcal{O}_z=R^d(\#\mathcal{C}_{\bar{z}})$, we have from \eqref{eq:2dim1}, \eqref{eq:2dim2} and \eqref{eq:Lawler},
\begin{align*}
	\psi_d(z,\omega)
	\leq \frac{c(2d)^{2R^d}}{1-e^{-1}}(\#\mathcal{O}_z)
	\leq \frac{c(2d)^{2R^d}R^d}{1-e^{-1}} (1+\#\mathcal{C}_{\bar{z}}).
\end{align*}
Consequently, the lemma follows by taking $\Cr{cluster}:=c(2d)^{2R^d}R^d/(1-e^{-1})$.
\end{proof}

We now turn to the proof of Proposition~\ref{prop:rankone} for $d=1,2$.

\begin{proof}[\bf Proof of Proposition~\ref{prop:rankone} for $\boldsymbol{d=1,2}$]
Lemmata~\ref{lem:psi} and \ref{lem:cluster} imply that for all $0<r \leq q_0$, $y \in \Z^d \setminus \{0\}$ and $N \in \N$ with $N \geq \|y\|_\infty$,
\begin{align*}
	&\sum_{z \in \Z^d \cap [-N,N]^d} \E_r \biggl[ \biggl| \log\frac{e_N(0,y,\omega_z)}{e_N(0,y,\omega)} \biggr| \biggr]\\
	&\leq 4\Cr{cluster} \sum_{z \in \Z^d \cap [-N,N]^d} \E_r\Bigl[ (1+\#\mathcal{C}_{\bar{z}}) \tilde{P}_{N,\omega}^{0,y}(H(z)<H(y)) \Bigr]\\
	&= 4\Cr{cluster} \Biggl( \E_r\Bigl[ \tilde{E}_{N,\omega}^{0,y}[\#\mathcal{A}(0,y)] \Bigr]
		+\E_r\Biggl[ \tilde{E}_{N,\omega}^{0,y} \Biggl[ \sum_{z \in \mathcal{A}(0,y)} \#\mathcal{C}_{\bar{z}} \Biggr] \Biggr] \Biggr).
\end{align*}
Hence, it suffices to prove that there exists a constant $\Cl{animal}$ (which depends only on $d$ and $q_0$) such that all $0<r \leq q_0$, $y \in \Z^d \setminus \{0\}$ and $N \in \N$ with $N \geq \|y\|_\infty$,
\begin{align}\label{eq:FN}
	\E_r\Biggl[ \tilde{E}_{N,\omega}^{0,y} \Biggl[ \sum_{z \in \mathcal{A}(0,y)} \#\mathcal{C}_{\bar{z}} \Biggr] \Biggr]
	\leq \Cr{animal}\E_r\Bigl[ \tilde{E}_{N,\omega}^{0,y}[\#\mathcal{A}(0,y)] \Bigr].
\end{align}

To show \eqref{eq:FN}, we state some results for lattice animals on $\Z^d$ (which are finite connected subsets of $\Z^d$).
For $n \geq 1$, denote by $\mathbb{A}_n$ the set of all lattice animals, of size $n$, containing $0$.
Moreover, let $(\tilde{\Gamma}_v)_{v \in \Z^d}$ be i.i.d.~random lattice animals with the common law $\P_{q_0}(\mathcal{C}_0 \in \cdot)$ (We write $P$ for the probability measure governing $(\tilde{\Gamma}_v)_{v \in \Z^d}$).
Then, due to \eqref{eq:subcritical}, the following lemma is an immediate consequence of \cite[(2.6) and (2.7)]{FonNew93} and \cite[page~183]{Sch14}.

\begin{lem}\label{lem:GLA}
The following results hold:
\begin{enumerate}
	\item\label{item:GLA2}
		We have for all $n \geq 1$ and $t \geq 0$,
		\begin{align*}
			\P_{q_0}\Biggl( \sup_{\Gamma \in \mathbb{A}_n} \frac{1}{\#\Gamma}\sum_{v \in \Gamma} \#\mathcal{C}_v \geq t \Bigg)
			\leq P\Biggl( \sup_{\Gamma \in \bigcup_{m=n}^\infty \mathbb{A}_m} \frac{1}{\#\Gamma} \sum_{v \in \Gamma} (\#\tilde{\Gamma}_v)^2 \geq \frac{t}{2} \Bigg).
		\end{align*}
	\item\label{item:GLA1}
		There exist constants $\Cl{Martin1}$ and $\Cl{Martin2}$ (which depend only on $d$ and $q_0$) such that for all $n \geq 1$,
		\begin{align*}
			P\Biggl( \sup_{\Gamma \in \mathbb{A}_n} \frac{1}{\#\Gamma} \sum_{v \in \Gamma}(\#\tilde{\Gamma}_v)^2 \geq \Cr{Martin1} \Biggr)
			\leq \Cr{Martin1}\exp\{-\Cr{Martin2}n^{1/5}\}.
		\end{align*}
\end{enumerate}
\end{lem}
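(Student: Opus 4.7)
The plan is to read off both parts from the greedy lattice animal framework of \cite{FonNew93} and its refinement in \cite{Sch14}; the only point that needs checking is that \eqref{eq:subcritical} really places our coarse-grained site percolation strictly below criticality. Setting $m := R^d \geq 2$, the parameter $r^m + r^{m-1}(1-r)m = r^{m-1}(m - (m-1)r)$ has derivative $m(m-1)r^{m-2}(1-r) > 0$ on $(0,1)$, so it is monotone in $r$ and by \eqref{eq:subcritical} stays strictly below $p_c$ uniformly for $0 < r \leq q_0$. Consequently $\#\mathcal{C}_0$ has exponentially decaying tails under $\P_{q_0}$, and the iid copies $\#\tilde{\Gamma}_v$ inherit the same tails, which is the standing integrability hypothesis of the cited works.

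For part \eqref{item:GLA2} I would run a double-counting/coupling argument in the spirit of \cite[(2.6)]{FonNew93}. Fix a realization of $\omega$ and any $\Gamma \in \mathbb{A}_n$, and enumerate the distinct open clusters $\mathcal{C}^{(1)}, \ldots, \mathcal{C}^{(k)}$ meeting $\Gamma$. Their union $\Gamma^* := \bigcup_{i=1}^k \mathcal{C}^{(i)}$ is connected (as a union of connected sets each intersecting the connected $\Gamma$) and contains $\Gamma$, so $\Gamma^* \in \bigcup_{m \geq n}\mathbb{A}_m$. Double counting gives
\begin{equation*}
\sum_{v \in \Gamma} \#\mathcal{C}_v \;=\; \sum_{i=1}^k |\mathcal{C}^{(i)} \cap \Gamma|\,\#\mathcal{C}^{(i)},
\end{equation*}
and combining $2ab \leq a^2+b^2$ with $|\mathcal{C}^{(i)} \cap \Gamma|^2 \leq |\mathcal{C}^{(i)} \cap \Gamma|\cdot\#\mathcal{C}^{(i)}$ bounds this by $\sum_i(\#\mathcal{C}^{(i)})^2$. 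The Fontes--Newman coupling then replaces each $(\#\mathcal{C}^{(i)})^2$ by the squared size of an iid representative $\tilde{\Gamma}_{v_i}$ for some $v_i \in \mathcal{C}^{(i)} \subseteq \Gamma^*$, absorbing the whole expression into $\sum_{v \in \Gamma^*}(\#\tilde{\Gamma}_v)^2$ at the cost of the factor $2$ appearing on the right-hand side. Taking suprema over $\Gamma \in \mathbb{A}_n$ on the left and over $\bigcup_{m \geq n}\mathbb{A}_m$ on the right, and then passing to $\P_{q_0}$-probabilities, gives the stated inequality.

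Part \eqref{item:GLA1} is then a standard concentration estimate for the greedy lattice-animal functional with exponentially tailed summands. Since $(\#\tilde{\Gamma}_0)^2$ has stretched-exponential tails of order $\exp(-c\sqrt{t})$ by the subcriticality above, Scholler's refinement \cite[p.~183]{Sch14} of the Martin-type moment generating function method of \cite{FonNew93} produces the deviation bound $\Cr{Martin1}\exp\{-\Cr{Martin2}n^{1/5}\}$ with constants depending only on the law of $\#\mathcal{C}_0$, hence only on $d$ and $q_0$.

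The main obstacle is the Fontes--Newman rearrangement in part \eqref{item:GLA2}: the clean identification of the correlated cluster sum on $\Gamma$ with an iid sum of squares on the enlarged animal $\Gamma^*$ is delicate, requiring both the stationarity of the site percolation and the independence of disjoint clusters. Fortunately this rearrangement, as well as the Scholler concentration bound, is stated in essentially the required form in \cite{FonNew93,Sch14} once subcriticality of the site percolation is in place, so once the monotonicity check above is carried out the proof reduces to a direct invocation of those references.
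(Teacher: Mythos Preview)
The paper does not actually prove this lemma: it records that, thanks to \eqref{eq:subcritical}, the statement is an immediate consequence of \cite[(2.6) and (2.7)]{FonNew93} and \cite[page~183]{Sch14}. Your proposal invokes the same two references and sketches the mechanism behind them, so at the level of strategy there is no difference.

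Two points in the sketch deserve correction, though. First, your set $\Gamma^* := \bigcup_i \mathcal{C}^{(i)}$ need not be connected and need not contain $\Gamma$: a closed vertex $v \in \Gamma$ has $\#\mathcal{C}_v = 0$ and lies in no open cluster, and two open clusters meeting $\Gamma$ may be separated by such closed vertices. Your parenthetical justification in fact shows that $\Gamma \cup \bigcup_i \mathcal{C}^{(i)}$ is connected, and that is the correct choice of enlarged animal. Second, you do not address how the normalization by $\#\Gamma^*$ on the right compares with the normalization by $\#\Gamma = n$ on the left (since $\#\Gamma^* \geq n$, passing to the larger denominator goes the wrong way), nor where the factor $2$ actually enters; this is precisely the Fontes--Newman bookkeeping in \cite[(2.6)--(2.7)]{FonNew93} that you eventually defer to, so your sketch stops at the same point the paper's citation does. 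Finally, your monotonicity-in-$r$ computation is correct but unnecessary for the lemma as stated, since both sides are formulated at the fixed parameter $q_0$ and \eqref{eq:subcritical} already gives subcriticality there.
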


Define for any lattice animal $\Gamma$ on $\Z^d$,
\begin{align*}
	\mathcal{E}(\Gamma):=\biggl\{ \frac{1}{\#\Gamma} \sum_{v \in \Gamma} \#\mathcal{C}_v \geq 3\Cr{Martin1} \biggr\}.
\end{align*}
Moreover, for each $y \in \Z^d \setminus \{0\}$, set
\begin{align*}
	\bar{\mathcal{A}}(0,y):=\{ \bar{z}:z \in \mathcal{A}(0,y) \},
\end{align*}
which is a lattice animal on $\Z^d$ containing $0$.
Then, for all $0<r \leq q_0$, $y \in \Z^d \setminus \{0\}$ and $N \in \N$ with $N \geq \|y\|_\infty$, the left side of \eqref{eq:FN} is smaller than or equal to
\begin{align*}
	&R^d\E_r\Biggl[ \tilde{E}_{N,\omega}^{0,y} \Biggl[ \sum_{v \in \bar{\mathcal{A}}(0,y)} \#\mathcal{C}_v \Biggr] \Biggr]\\
	&\leq R^d \Biggl\{ 2\Cr{Martin1}\E_r\Bigl[ \tilde{E}_{N,\omega}^{0,y}[\#\bar{\mathcal{A}}(0,y)] \Bigr]
		+\E_r\Biggl[ \tilde{E}_{N,\omega}^{0,y} \Biggl[ \Biggl( \sum_{v \in \bar{\mathcal{A}}(0,y)} \#\mathcal{C}_v \Biggr) \1{\mathcal{E}(\bar{\mathcal{A}}(0,y))} \Biggr] \Biggr] \Biggr\}.
\end{align*}
Since $\#\bar{\mathcal{A}}(0,y) \leq \#\mathcal{A}(0,y)$, our task is now to prove that there exists a constant $c$ (which depends only on $d$ and $q_0$) such that for all $0<r \leq q_0$, $y \in \Z^d \setminus \{0\}$ and $N \in \N$ with $N \geq \|y\|_\infty$,
\begin{align}\label{eq:harmless}
	\E_r\Biggl[ \tilde{E}_{N,\omega}^{0,y} \Biggl[ \Biggl( \sum_{v \in \bar{\mathcal{A}}(0,y)} \#\mathcal{C}_v \Biggr) \1{\mathcal{E}(\bar{\mathcal{A}}(0,y))} \Biggr] \Biggr]
	\leq c.
\end{align}
Indeed, once \eqref{eq:harmless} is proved, the left side of \eqref{eq:FN} is bounded from above by
\begin{align*}
	(2\Cr{Martin1}+c)R^d\E_r\Bigl[ \tilde{E}_{N,\omega}^{0,y}[\#\mathcal{A}(0,y)] \Bigr],
\end{align*}
and \eqref{eq:FN} follows by taking $\Cr{animal}:=(2\Cr{Martin1}+c)R^d$.

For any $0<r \leq q_0$, $y \in \Z^d \setminus \{0\}$ and $N \in \N$ with $N \geq \|y\|_\infty$, the left side in \eqref{eq:harmless} is equal to
\begin{align*}
	&\sum_{n=1}^\infty \E_r\Biggl[ \tilde{E}_{N,\omega}^{0,y} \Biggl[ \Biggl( \sum_{v \in \bar{\mathcal{A}}(0,y)} \#\mathcal{C}_v \Biggr) \1{\mathcal{E}(\bar{\mathcal{A}}(0,y))} \1{\{ \#\bar{\mathcal{A}}(0,y)=n \}} \Biggr] \Biggr]\\
	&\leq \sum_{n=1}^\infty \sum_{v \in [-n,n]^d} \E_r\bigl[ \#\mathcal{C}_v \1{\bigcup_{\Gamma \in \mathbb{A}_n} \mathcal{E}(\Gamma)} \bigr].
\end{align*}
Note that $\E_r\bigl[ \#\mathcal{C}_v \1{\bigcup_{\Gamma \in \mathbb{A}_n} 
\mathcal{E}(\Gamma)} \bigr]$ is increasing in $r$.
This combined with the Cauchy--Schwarz inequality implies that for all $n \geq 1$ and $v \in [-n,n]^d$,
\begin{align*}
	\E_r\bigl[ \#\mathcal{C}_v \1{\bigcup_{\Gamma \in \mathbb{A}_n} \mathcal{E}(\Gamma)} \bigr]
	&\leq \E_{q_0}\bigl[ \#\mathcal{C}_v \1{\bigcup_{\Gamma \in \mathbb{A}_n} \mathcal{E}(\Gamma)} \bigr]\\
	&\leq \E_{q_0}\bigl[ \#\mathcal{C}_0^2 \bigr]^{1/2} \P_{q_0}\Biggl( \sup_{\Gamma \in \mathbb{A}_n} \frac{1}{\#\Gamma} \sum_{v' \in \Gamma}\#\mathcal{C}_{v'} \geq 3\Cr{Martin1} \Biggr)^{1/2}.
\end{align*}
Lemma~\ref{lem:GLA} says that there exists a constant $c'$ (which depends only on $d$ and $q_0$) such that for all $n \geq 1$,
\begin{align*}
	\P_{q_0}\Biggl( \sup_{\Gamma \in \mathbb{A}_n} \frac{1}{\#\Gamma} \sum_{v' \in \Gamma}\#\mathcal{C}_{v'} \geq 2\Cr{Martin1} \Biggr)
	&\leq P\Biggl( \sup_{\Gamma \in \bigcup_{m=n}^\infty \mathbb{A}_m} \frac{1}{\#\Gamma} \sum_{v' \in \Gamma} (\#\tilde{\Gamma}_{v'})^2 \geq \Cr{Martin1} \Bigg)\\
	&\leq \sum_{m=n}^\infty P\Biggl( \sup_{\Gamma \in \mathbb{A}_m} \frac{1}{\#\Gamma} \sum_{v \in \Gamma} (\#\tilde{\Gamma}_v)^2 \geq \Cr{Martin1} \Bigg)\\
	&\leq c'\exp\Bigl\{ -\frac{\Cr{Martin2}}{2}n^{1/5} \Bigr\}.
\end{align*}
With these observations, we can derive \eqref{eq:harmless} as follows:
\begin{align*}
	&\E_r\Biggl[ \tilde{E}_{N,\omega}^{0,y} \Biggl[ \Biggl( \sum_{v \in \bar{\mathcal{A}}(0,y)} \#\mathcal{C}_v \Biggr) \1{\mathcal{E}(\bar{\mathcal{A}}(0,y))} \Biggr] \Biggr]\\
	&\leq (c'\E_{q_0}\bigl[ \#\mathcal{C}_0^2 \bigr])^{1/2} \sum_{n=1}^\infty (2n+1)^d \exp\Bigl\{ -\frac{\Cr{Martin2}}{4}n^{1/5} \Bigr\}<\infty.
\end{align*}
Therefore, the proof of Proposition~\ref{prop:rankone} is complete for $d=1,2$.
\end{proof}

\section{Bounds for the annealed Lyapunov exponent}\label{sect:abound}
This section is devoted to the proof of Theorem~\ref{thm:diff_alyap}.
We fix $q_0$ with $0<q_0<1$.
For each $0 \leq r \leq 1$, the path measure $\tilde{\P}_r^{0,y}$ is defined by
\begin{align}\label{eq:apathmeas}
	\frac{d\tilde{\P}_r^{0,y}}{dP^0}
	= \E_r[e(0,y,\omega)]^{-1} \E_r\Biggl[ \exp\Biggl\{ -\sum_{k=0}^{H(y)-1}\omega(S_k) \Biggr\}\Biggr] \1{\{ H(y)<\infty \}},
\end{align}
and $\tilde{\E}_r^{0,y}$ is the expectation with respect to $\tilde{\P}_r^{0,y}$.
Then, the following two lemmata are the key to proving Theorem~\ref{thm:diff_alyap}.

\begin{lem}\label{lem:derivative_b}
We have for all $0<r<1$ and $y \in \Z^d \setminus \{0\}$,
\begin{align*}
	-\frac{d}{dr}b_r(0,y)
	= \tilde{\E}_r^{0,y}\Biggl[ \sum_{\substack{z \in \Z^d\\ \ell_z(H(y)) \geq 1}} \frac{1-e^{-\ell_z(H(y))}}{r+e^{-\ell_z(H(y))}(1-r)} \Biggr].
\end{align*}
\end{lem}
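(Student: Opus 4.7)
The plan is to integrate out the potential by Fubini, differentiate the resulting explicit expression in $r$, and recognize the ratio as an expectation under $\tilde{\P}_r^{0,y}$.

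First I would apply Fubini's theorem to write
\begin{align*}
\E_r[e(0,y,\omega)]
= E^0\Biggl[ \E_r\Biggl[ \exp\Biggl\{ -\sum_{k=0}^{H(y)-1}\omega(S_k) \Biggr\} \Biggr] \1{\{H(y)<\infty\}} \Biggr].
\end{align*}
For a fixed realisation of $\S$, one has $\sum_{k=0}^{H(y)-1}\omega(S_k)=\sum_{z}\ell_z(H(y))\omega(z)$, and since the $\omega(z)$ are i.i.d.~Bernoulli under $\P_r$, the inner expectation factorises as
\begin{align*}
\prod_{z \in \Z^d} g_r(\ell_z(H(y))),
\qquad g_r(\ell):=r+(1-r)e^{-\ell}.
\end{align*}
Only finitely many factors differ from $1$ (at most $H(y)$ of them).

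Next I would differentiate the finite product at each fixed path, using $\frac{d}{dr}\log g_r(\ell)=(1-e^{-\ell})/g_r(\ell)$, to get
\begin{align*}
\frac{d}{dr}\prod_{z}g_r(\ell_z(H(y)))
= \Biggl(\prod_{z}g_r(\ell_z(H(y)))\Biggr)\sum_{\substack{z\in\Z^d\\ \ell_z(H(y))\geq 1}}\frac{1-e^{-\ell_z(H(y))}}{r+(1-r)e^{-\ell_z(H(y))}}.
\end{align*}
To move $d/dr$ through $E^0$, I would work on a neighbourhood $[r_0,r_1]\subset(0,1)$ of $r$: each term in the bracketed sum is bounded by $1/r_0$, each factor $g_r(\ell_z(H(y)))$ with $\ell_z\geq 1$ is at most $c:=r_1+(1-r_1)e^{-1}<1$, and the total number of such factors equals $\#\mathcal{A}(0,y)$. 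Thus the derivative is dominated in absolute value by $r_0^{-1}\,\#\mathcal{A}(0,y)\,c^{\#\mathcal{A}(0,y)}\,\1{\{H(y)<\infty\}}$, and since $t\mapsto t c^t$ is a bounded function of $t\geq 0$, this is bounded by an absolute constant on $\{H(y)<\infty\}$, hence $P^0$-integrable. Dominated convergence then justifies differentiation under $E^0$.

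Finally I would write
\begin{align*}
-\frac{d}{dr}b_r(0,y)
= \frac{1}{\E_r[e(0,y,\omega)]}\,\frac{d}{dr}\E_r[e(0,y,\omega)],
\end{align*}
and recognise that the integrand on the right-hand side is precisely $(d\tilde{\P}_r^{0,y}/dP^0)$ times the sum $\sum_{z:\ell_z(H(y))\geq 1}(1-e^{-\ell_z(H(y))})/(r+e^{-\ell_z(H(y))}(1-r))$; this yields the claimed identity. The only real obstacle is the differentiation-under-the-integral step, and it is handled by the product's one-site-at-a-time decay factor $c<1$, which exactly counteracts the linear factor $\#\mathcal{A}(0,y)$ coming from the derivative.
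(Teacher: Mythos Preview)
Your proof is correct and follows the same overall route as the paper: apply Fubini, factorise over sites using independence, differentiate the finite product pathwise, justify the interchange of $d/dr$ and $E^0$, and read off the path-measure expectation.

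The one substantive difference is in how you dominate $\phi_r(r,\S)$. The paper (in its appendix) observes that for each $z$ the function $r\mapsto \phi(r,\sigma)/(r+e^{-\ell_z}(1-r))$ is increasing, bounds $|\phi_r|$ by $r_0^{-1}\phi(r_0,\sigma)\,\#\mathcal{A}(0,y)$ for $r<r_0$, and then needs the nontrivial estimate $\tilde{\E}_{r_0}^{0,y}[\#\mathcal{A}(0,y)]<\infty$ (imported from Zerner's work) to get integrability. Your argument is more elementary: you bound $\phi(r,\S)\le c^{\#\mathcal{A}(0,y)}$ with $c=r_1+(1-r_1)e^{-1}<1$ and use that $t\mapsto t c^t$ is bounded, so the dominating function is simply a constant on $\{H(y)<\infty\}$. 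This avoids any appeal to the lattice-animal bound and is self-contained; the paper's route, on the other hand, reuses an estimate it already needs elsewhere.
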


\begin{lem}\label{lem:l_bound}
If $d \geq 3$, then there exists a constant $\Cl{an_GLA}$ (which depends only on $d$ and $q_0$) such that for all $0<r \leq q_0$ and $y \in \Z^d \setminus \{0\}$,
\begin{align*}
	\tilde{\E}_r^{0,y}\Biggl[ \sum_{\substack{z \in \Z^d\\ \ell_z(H(y)) \geq 1}} \frac{1-e^{-\ell_z(H(y))}}{r+e^{-\ell_z(H(y))}(1-r)} \Biggr]
	\leq \Cr{an_GLA}\|y\|_1.
\end{align*}
\end{lem}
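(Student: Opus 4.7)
The plan is to adapt the argument of Section~\ref{ssect:high} to the annealed path measure via a Fubini trick that reduces each term to a quenched calculation for fixed $\omega$. First, I would multiply both sides of Lemma~\ref{lem:derivative_b} by $\E_r[e(0,y,\omega)]$ and apply the crude bound $(1-e^{-\ell_z(H(y))})\1{\{\ell_z(H(y))\geq 1\}} \leq \1{\{\ell_z(H(y))\geq 1\}}$; it then suffices to show
\begin{align*}
	\sum_{z \in \Z^d} Q(z) \leq C\|y\|_1\, \E_r[e(0,y,\omega)],\qquad
	Q(z):=E^0\Biggl[\prod_{z'\neq z}\bigl(r+(1-r)e^{-\ell_{z'}(H(y))}\bigr) \1{\{\ell_z(H(y))\geq 1,\,H(y)<\infty\}}\Biggr],
\end{align*}
for some $C=C(d,q_0)$.

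The next step is a Fubini rewriting. Interpreting $\prod_{z'}(r+(1-r)e^{-\ell_{z'}})$ as the $\omega$-average of the path weight $e^{-\sum_{k<H(y)}\omega(S_k)}$, omitting the $z$-th factor is the same as replacing $\omega(z)$ by $0$, which at the quenched level multiplies the path weight by $e^{\omega(z)\ell_z(H(y))}$. This gives
\begin{align*}
	Q(z) = \E_r\bigl[e(0,y,\omega)\,\tilde{E}_\omega^{0,y}\bigl[e^{\omega(z)\ell_z(H(y))}\1{\{H(z)<H(y)\}}\bigr]\bigr],
\end{align*}
where $\tilde{E}_\omega^{0,y}$ is the quenched path expectation (the $N=\infty$ analogue of \eqref{eq:qpathmeas}). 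I would then apply strong Markov at $H(z)$ under $\tilde{P}_\omega^{0,y}$, factoring the inner expectation as $\tilde{P}_\omega^{0,y}(H(z)<H(y))\cdot \tilde{E}_\omega^{z,y}[e^{\omega(z)\ell_z(H(y))}]$. The second factor is evaluated by the excursion decomposition already used in Lemma~\ref{lem:psi}: writing
\begin{align*}
	e(z,y,\omega) = \frac{e^{-\omega(z)}\mathcal{E}_y}{1-e^{-\omega(z)}\mathcal{R}},\qquad \mathcal{R}:=E^z\Biggl[\exp\Biggl(-\sum_{j=1}^{H^+(z)-1}\omega(S_j)\Biggr)\1{\{H^+(z)<H(y)\}}\Biggr],
\end{align*}
with $\mathcal{E}_y$ the analogous factor for the direct excursion from $z$ to $y$, a short calculation yields
\begin{align*}
	\tilde{E}_\omega^{z,y}[e^{\omega(z)\ell_z(H(y))}] = \begin{cases}
		(e-\mathcal{R})/(1-\mathcal{R}) & \text{if } \omega(z)=1,\\
		1 & \text{if } \omega(z)=0.
	\end{cases}
\end{align*}
The hypothesis $d\geq 3$ enters only at this point: by transience, $\mathcal{R}\leq P^z(H^+(z)<\infty) < 1$ uniformly in $\omega$, $y$, and $z$, so the above is bounded by an absolute constant $C_d$.

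Summing over $z$ and using $\sum_z \tilde{P}_\omega^{0,y}(H(z)<H(y))=\tilde{E}_\omega^{0,y}[\#\mathcal{A}(0,y)]$, then integrating against $\E_r$, gives $\sum_z Q(z) \leq C_d\, \E_r[e(0,y,\omega)]\,\tilde{\E}_r^{0,y}[\#\mathcal{A}(0,y)]$. Dividing by $\E_r[e(0,y,\omega)]$ and invoking the annealed counterpart of \eqref{eq:eanimal}, namely $\tilde{\E}_r^{0,y}[\#\mathcal{A}(0,y)]\leq C(q_0)\|y\|_1$ (obtained by repeating the argument of \cite[Lemma~3]{Zer98a} under $\tilde{\P}_r^{0,y}$, using the straight-path lower bound $\E_r[e(0,y,\omega)]\geq [(2d)^{-1}(r+(1-r)e^{-1})]^{\|y\|_1}$ for the normalization), will close the argument with $\Cr{an_GLA}=C_d C(q_0)$. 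The main obstacle I anticipate is the uniform-in-$\omega$ excursion bound of the middle paragraph, which requires transience in an essential way and is the sole reason $d\geq 3$ is needed; the annealed lattice-animal estimate is a routine transcription of the quenched argument behind \eqref{eq:eanimal}.
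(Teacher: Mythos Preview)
Your argument is correct. Both your proof and the paper's rest on the same three ingredients: a strong Markov / excursion decomposition at $z$, the transience bound $P^0(H^+(0)<\infty)<1$ to control the geometric series of returns, and the annealed lattice-animal estimate $\tilde{\E}_r^{0,y}[\#\mathcal{A}(0,y)]\leq C(q_0)\|y\|_1$, which is exactly \eqref{eq:qLA} in the paper. The difference is organizational. The paper stays in the annealed setting throughout: it decomposes the expectation according to the value $\ell=\ell_z(H(y))$, bounds each $\E_r[\rho_\ell(z,\omega)]$ via strong Markov, and observes that averaging over $\omega(z)\in\{0,1\}$ produces precisely the factor $r+e^{-\ell}(1-r)$ that cancels the denominator, leaving a bare geometric sum $\sum_\ell P^0(H^+(0)<\infty)^{\ell-1}$. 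You instead bound $1-e^{-\ell_z}\le 1$, use Fubini to rewrite $Q(z)$ as an $\E_r$-average of a \emph{quenched} quantity, apply strong Markov under $\tilde{P}_\omega^{0,y}$, and sum the geometric series in closed form as $(e-\mathcal{R})/(1-\mathcal{R})\le e\,P^0(H^+(0)=\infty)^{-1}$. Your detour through the quenched measure makes the parallel with Section~\ref{ssect:high} explicit (your bound on $\tilde{E}_\omega^{z,y}[e^{\omega(z)\ell_z(H(y))}]$ is essentially the bound $e\,\psi_d(z,\omega)$ combined with \eqref{eq:psi3}); the paper's route is slightly more direct in that it never introduces the quenched path measure for this lemma. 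Neither packaging gives a sharper constant.
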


Lemma~\ref{lem:derivative_b} gives the derivative of $b_r(0,y)$ at $r$ by using the path measure $\tilde{\P}_r^{0,y}$.
Moreover, Lemma~\ref{lem:l_bound} guarantees that in the case $d \geq 3$, the derivative of $b_r(0,y)$ at $r$ can be bounded from below uniformly in $r \in (0,q_0]$.
Let us show Theorem~\ref{thm:diff_alyap} before proving the lemmata above.

\begin{proof}[\bf Proof of Theorem~\ref{thm:diff_alyap}]
We first prove part~\eqref{item:diff_al1} of Theorem~\ref{thm:diff_alyap}, which gives lower bounds for differences between annealed Lyapunov exponents.
Note that for each $0<r<1$, the function $f(t):=(1-t)/\{ r+t(1-r) \}$ is decreasing in $t \in [0,1]$.
Hence, Lemma~\ref{lem:derivative_b} and the fact that $\#\mathcal{A}(0,y) \geq \|y\|_1$ holds $\tilde{\P}_r^{0,y}$-a.s.~yield that for all $0<r<1$ and $y \in \Z^d \setminus \{0\}$,
\begin{align*}
	-\frac{d}{dr}b_r(0,y)
	&= \tilde{\E}_r^{0,y}\Biggl[ \sum_{\substack{z \in \Z^d\\ \ell_z(H(y)) \geq 1}} \frac{1-e^{-\ell_z(H(y))}}{r+e^{-\ell_z(H(y))}(1-r)} \Biggr]\\
	&\geq \frac{1-e^{-1}}{r+e^{-1}(1-r)}\,\tilde{\E}_r^{0,y}[\#\mathcal{A}(0,y)]\\
	&\geq (1-e^{-1})\|y\|_1.
\end{align*}
Thus, similar to the proof of part~\eqref{item:diff_al1} in Theorem~\ref{thm:diff_qlyap}, one has for all $0<p<q<1$,
\begin{align*}
	\inf_{x \in \R^d\setminus\{0\}}\frac{\beta_p(x)-\beta_q(x)}{\|x\|_1}
	\geq (1-e^{-1})(q-p),
\end{align*}
which is the desired lower bound for the difference between annealed Lyapunov exponents.

Let us next show part~\eqref{item:diff_al2} of Theorem~\ref{thm:diff_alyap}, which gives upper bounds for differences between annealed Lyapunov exponents.
Lemma~\ref{lem:derivative_b} and the monotonicity of the function $f(t)=(1-t)/\{ r+t(1-r) \}$, $0 \leq t \leq 1$, tell us that for all $0<r \leq q_0$ and $y \in \Z^d \setminus \{0\}$,
\begin{align}\label{eq:b_low}
\begin{split}
	-\frac{d}{dr}b_r(0,y)
	&= \tilde{\E}_r^{0,y}\Biggl[ \sum_{\substack{z \in \Z^d\\ \ell_z(H(y)) \geq 1}} \frac{1-e^{-\ell_z(H(y))}}{r+e^{-\ell_z(H(y))}(1-r)} \Biggr]\\
	&\leq r^{-1}\tilde{\E}_r^{0,y}[\#\mathcal{A}(0,y)].
\end{split}
\end{align}
Moreover, by the same arguments as in \cite[Lemma~3]{Zer98a}, it holds that for all $0<r<1$ and $y \in \Z^d \setminus \{0\}$,
\begin{align}\label{eq:qLA}
	\tilde{\E}_r^{0,y}[\#\mathcal{A}(0,y)]
	\leq \frac{1+\log(2d)}{-\log\{ e^{-1}+(1-e^{-1})r \}}\|y\|_1.
\end{align}
Hence, \eqref{eq:b_low} and \eqref{eq:qLA} imply that for all $0<r \leq q_0$ and $y \in \Z^d \setminus \{0\}$,
\begin{align*}
	-\frac{d}{dr}b_r(0,y)
	\leq \frac{1+\log(2d)}{-\log\{ e^{-1}+(1-e^{-1})q_0 \}}\,r^{-1}\|y\|_1.
\end{align*}
Therefore, by taking
\begin{align*}
	\Cr{alyap}:=\frac{1+\log(2d)}{-\log\{ e^{-1}+(1-e^{-1})q_0 \}},
\end{align*}
one sees immediately that for all $0<p<q \leq q_0$,
\begin{align*}
	\sup_{x \in \R^d \setminus \{0\}}	\frac{\beta_p(x)-\beta_q(x)}{\|x\|_1} \leq \Cr{alyap}(\log q-\log p).
\end{align*}
In particular, if $d \geq 3$, then Lemma~\ref{lem:l_bound} allows us to obtain the following bound for the derivative of $b_r(0,y)$ at $r$ instead of \eqref{eq:b_low}:
For all $0<r \leq q_0$ and $y \in \Z^d \setminus \{0\}$,
\begin{align*}
	-\frac{d}{dr}b_r(0,y)
	= \tilde{\E}_r^{0,y}\Biggl[ \sum_{\substack{z \in \Z^d\\ \ell_z(H(y)) \geq 1}}
		\frac{1-e^{-\ell_z(H(y))}}{r+e^{-\ell_z(H(y))}(1-r)} \Biggr]
	\leq \Cr{an_GLA}\|y\|_1.
\end{align*}
It follows immediately that
\begin{align*}
	\sup_{x \in \R^d \setminus \{0\}}	\frac{\beta_p(x)-\beta_q(x)}{\|x\|_1} \leq \Cr{an_GLA}(q-p),
\end{align*}
and the proof of part~\eqref{item:diff_al2} in Theorem~\ref{thm:diff_alyap} is complete.
\end{proof}

We close this section with the proofs of Lemmata~\ref{lem:derivative_b} and \ref{lem:l_bound}.

\begin{proof}[\bf Proof of Lemma~\ref{lem:derivative_b}]
First of all, we often add $\S$ to the notation of $H(y)$ and $\ell_z(N)$ to clarify the dependency of the trajectory $\S$ of the simple random walk: $H(y)=H(y,\S)$ and $\ell_z(N)=\ell_z(N,\S)$.
Fix $y \in \Z^d \setminus \{0\}$, and define for $0<r<1$ and the trajectory $\S$ of the simple random walk,
\begin{align*}
	\phi(r,\S)
	&:= \E_r\Biggl[ \exp\Biggl\{ -\sum_{k=0}^{H(y,\S)-1}\omega(S_k) \Biggr\}\Biggr] \1{\{ H(y,\S)<\infty \}}\\
	&= \prod_{\substack{z \in \Z^d\\ \ell_z(H(y,\S),\S) \geq 1}} \E_r\bigl[ e^{-\ell_z(H(y,\S),\S)\omega(0)} \bigr] \1{\{ H(y,\S)<\infty \}}.
\end{align*}
It follows from Fubini's theorem that $b_r(0,y)=-\log E^0[\phi(r,\S)]$.
Hence, our task is to show that
\begin{align}\label{eq:diff_phi}
	\frac{d}{dr} E^0[\phi(r,\S)]
	= E^0\Biggl[ \phi(r,\S) \sum_{\substack{z \in \Z^d\\ \ell_z(H(y,\S),\S) \geq 1}} \frac{1-e^{-\ell_z(H(y,\S),\S)}}{r+e^{-\ell_z(H(y,\S),\S)}(1-r)} \Biggr].
\end{align}
Indeed, once \eqref{eq:diff_phi} is proved, we have
\begin{align*}
	\frac{d}{dr}b_r(0,y)
	&= -E^0[\phi(r,\S)]^{-1} E^0\Biggl[ \phi(r,\S) \sum_{\substack{z \in \Z^d\\ \ell_z(H(y,\S),\S) \geq 1}} \frac{1-e^{-\ell_z(H(y,\S),\S)}}{r+e^{-\ell_z(H(y,\S),\S)}(1-r)} \Biggr]\\
	&= -\tilde{\E}_r^{0,y}\Biggl[ \sum_{\substack{z \in \Z^d\\ \ell_z(H(y)) \geq 1}} \frac{1-e^{-\ell_z(H(y))}}{r+e^{-\ell_z(H(y))}(1-r)} \Biggr],
\end{align*}
which is the desired conclusion.

To prove \eqref{eq:diff_phi}, note that for the trajectory $\S$ of the simple random walk,
\begin{align*}
	\E_r\bigl[ e^{-\ell_z(H(y,\S),\S)\omega(0)} \bigr]
	= r+e^{-\ell_z(H(y,\S),\S)}(1-r).
\end{align*}
This tells us that $\phi(r,\S)$ is differentiable at $r \in (0,1)$ with the derivative
\begin{align}\label{eq:derivative_phi}
	\phi_r(r,\S)
	= \phi(r,\S) \sum_{\substack{z \in \Z^d\\ \ell_z(H(y,\S),\S) \geq 1}}
	\frac{1-e^{-\ell_z(H(y,\S),\S)}}{r+e^{-\ell_z(H(y,\S),\S)}(1-r)}.
\end{align}
Therefore, differentiating under the integral sign (the possibility of this operation will be checked in Appendix~\ref{app:diff_int}), one has for $0<r<1$,
\begin{align*}
	\frac{d}{dr} E^0[\phi(r,\S)]
	&=E^0[\phi_r(r,\S)]\\
	&= E^0\Biggl[ \phi(r,\S) \sum_{\substack{z \in \Z^d\\ \ell_z(H(y,\S),\S) \geq 1}} \frac{1-e^{-\ell_z(H(y,\S),\S)}}{r+e^{-\ell_z(H(y,\S),\S)}(1-r)} \Biggr],
\end{align*}
and \eqref{eq:diff_phi} follows.
\end{proof}

\begin{proof}[\bf Proof of Lemma~\ref{lem:l_bound}]
Let $d \geq 3$ and fix $0<r<1$ and $y \in \Z^d \setminus \{0\}$.
Furthermore, for each $z \in \Z^d$, set $H_1(z):=H(z)$ and define inductively,
\begin{align*}
	H_{\ell+1}(z):=\inf\{ k>H_\ell(z):S_k=z \},\qquad \ell \geq 1.
\end{align*}
Then, one has
\begin{align}\label{eq:rho}
\begin{split}
	&\tilde{\E}_r^{0,y}\Biggl[ \sum_{\substack{z \in \Z^d\\ \ell_z(H(y)) \geq 1}} \frac{1-e^{-\ell_z(H(y))}}{r+e^{-\ell_z(H(y))}(1-r)} \Biggr]\\
	&= \sum_{z \in \Z^d \setminus \{y\}} \E_r[e(0,y,\omega)]^{-1} \sum_{\ell=1}^\infty \frac{1-e^{-\ell}}{r+e^{-\ell}(1-r)} \,\E_r[\rho_\ell(z,\omega)],
\end{split}
\end{align}
where
\begin{align*}
	\rho_\ell(z,\omega)
	:= E^0\Biggl[ \exp\Biggl\{ -\sum_{k=0}^{H(y)-1}\omega(S_k) \Biggr\} \1{\{ H_\ell(z)<H(y)<H_{\ell+1}(z) \}} \Biggr].	
\end{align*}
The strong Markov property gives that
\begin{align*}
	\rho_\ell(z,\omega)
	&\leq E^z\Biggl[ \exp\Biggl\{ -\sum_{k=0}^{H^+(z)-1}\omega(S_k) \Biggr\} \1{\{ H^+(z)<\infty \}} \Biggr]^{\ell-1}\\
	&\quad \times
		E^0\Biggl[ \exp\Biggl\{ -\sum_{k=0}^{H(y)-1}\omega(S_k) \Biggr\} \1{\{ H(z)<H(y)<H_2(z) \}} \Biggr]
\end{align*}
(see the statement of Lemma~\ref{lem:psi} for the notation $H^+(z)$).
Hence, in the case where $\omega(z)=0$, $\rho_\ell(z,\omega)$ is bounded from above by
\begin{align}\label{eq:indep1}
\begin{split}
	&P^0(H^+(0)<\infty)^{\ell-1}\\
	&\times E^0\Biggl[ \exp\Biggl\{ -\sum_{\substack{0 \leq k<H(y)\\ S_k \not= z}}\omega(S_k) \Biggr\} \1{\{ H(z)<H(y)<H_2(z) \}} \Biggr].
\end{split}
\end{align}
On the other hand, in the case where $\omega(z)=1$, $\rho_\ell(z,\omega)$ is smaller than or equal to
\begin{align}\label{eq:indep2}
\begin{split}
	&e^{-\ell} P^0(H^+(0)<\infty)^{\ell-1}\\
	&\times E^0\Biggl[ \exp\Biggl\{ -\sum_{\substack{0 \leq k<H(y)\\ S_k \not= z}}\omega(S_k) \Biggr\} \1{\{ H(z)<H(y)<H_2(z) \}} \Biggr].
\end{split}
\end{align}
Note that both \eqref{eq:indep1} and \eqref{eq:indep2} are independent of $\omega(z)$, and the simple random walk visits only one time before hitting $y$ on the event $\{ H(z)<H(y)<H_2(z) \}$.
It follows that
\begin{align*}
	\E_r[\rho_\ell(z,\omega)]
	&\leq e\{ r+e^{-\ell}(1-r) \}P^0(H^+(0)<\infty)^{\ell-1}\\
	&\quad \times \E_r \Biggl[ E^0\Biggl[ \exp\Biggl\{ -\sum_{k=0}^{H(y)-1}\omega(S_k) \Biggr\} \1{\{ H(z)<H(y)<H_2(z) \}} \Biggr] \Biggr].
\end{align*}
Therefore, \eqref{eq:qLA}, \eqref{eq:rho} and the fact that $P^0(H^+(0)<\infty)<1$ holds for $d \geq 3$ yield that for all $0<r \leq q_0$ and $y \in \Z^d \setminus \{0\}$,
\begin{align*}
	&\tilde{\E}_r^{0,y}\Biggl[ \sum_{\substack{z \in \Z^d\\ \ell_z(H(y)) \geq 1}} \frac{1-e^{-\ell_z(H(y))}}{r+e^{-\ell_z(H(y))}(1-r)} \Biggr]\\
	&\leq e\sum_{\ell=1}^\infty P^0(H^+(0)<\infty)^{\ell-1}\,\tilde{\E}_r^{0,y}[\#\mathcal{A}(0,y)]\\
	&\leq eP^0(H^+(0)=\infty)^{-1}\frac{1+\log(2d)}{-\log\{ e^{-1}+(1-e^{-1})q_0 \}}\|y\|_1,
\end{align*}
which is the desired conclusion.
\end{proof}

\section{Comment on the large deviation principle}\label{sect:LDP}
In this section, we discuss the large deviation principle for the simple random walk in the Bernoulli potential.
Let $0 \leq r \leq 1$ and let $\omega$ be the Bernoulli potential with parameter $r$.
Consider the path measures $Q_{n,\omega}^\textrm{qu}$ and $Q_{n,r}^\textrm{an}$ defined as follows:
\begin{align*}
	\frac{dQ_{n,\omega}^\mathrm{qu}}{dP^0}
	= \frac{1}{Z_{n,\omega}^\mathrm{qu}} \exp\biggl\{ -\sum_{k=0}^{n-1}\omega(S_k) \biggr\}
\end{align*}
and
\begin{align*}
	\frac{dQ_{n,r}^\mathrm{an}}{dP^0}
	= \frac{1}{Z_{n,r}^\mathrm{an}} \E_r\biggl[ \exp\biggl\{ -\sum_{k=0}^{n-1}\omega(S_k) \biggr\} \biggr],
\end{align*}
where $Z_{n,\omega}^\mathrm{qu}$ and $Z_{n,r}^\mathrm{an}$ are the corresponding normalizing constants.
Moreover, for $\lambda \geq 0$, write $\alpha_r(\lambda,\cdot)$ and $\beta_r(\lambda,\cdot)$ for the quenched and annealed Lyapunov exponents for the potential $\omega+\lambda=(\omega(x)+\lambda)_{x \in \Z^d}$, respectively.
Note that $\alpha_r(\lambda,x)$ and $\beta_r(\lambda,x)$ are continuous in $(\lambda,x) \in [0,\infty) \times \R^d$ and concave increasing in $\lambda$ (see \cite[Theorem~A]{Flu07}, \cite[Theorem~1.1]{Mou12} and \cite[Proposition~4]{Zer98a}).
Then, set for $x \in \R^d$,
\begin{align*}
	I_r(x):=\sup_{\lambda \geq 0}\{ \alpha_r(\lambda,x)-\lambda \}
\end{align*}
and
\begin{align*}
	J_r(x):=\sup_{\lambda \geq 0}\{ \beta_r(\lambda,x)-\lambda \}.
\end{align*}

The following proposition states the large deviation principles for the simple random walk in the Bernoulli potential, which is a direct application of \cite[Theorem~B]{Flu07}, \cite[Theorem~{1.10}]{Mou12} and \cite[Theorem~19]{Zer98a}.

\begin{prop}\label{prop:ldp}
Let $0<r \leq 1$.
Then, the law of $S_n/n$ obeys the following quenched and annealed large deviation principles with the rate functions $I_r$ and $J_r$, respectively:
\begin{itemize}
	\item (Quenched case) $\P_r \hyphen \as$, for any Borel set $\Gamma$ in $\R^d$,
		\begin{align*}
			-\inf_{x \in \Gamma^o}I_r(x)
			&\leq \liminf_{n \to \infty} \frac{1}{n}\log Q_{n,\omega}^\mathrm{qu}(S_n \in n\Gamma)\\
			&\leq \limsup_{n \to \infty} \frac{1}{n}\log Q_{n,\omega}^\mathrm{qu}(S_n \in n\Gamma)
			\leq -\inf_{x \in \bar{\Gamma}}I_r(x).
		\end{align*}
	\item (Annealed case) For any Borel set $\Gamma$ in $\R^d$,
		\begin{align*}
			-\inf_{x \in \Gamma^o}J_r(x)
			&\leq \liminf_{n \to \infty} \frac{1}{n}\log Q_{n,r}^\mathrm{an}(S_n \in n\Gamma)\\
			&\leq \limsup_{n \to \infty} \frac{1}{n}\log Q_{n,r}^\mathrm{an}(S_n \in n\Gamma)
			\leq -\inf_{x \in \bar{\Gamma}}J_r(x).
		\end{align*}
\end{itemize}
Here $\Gamma^o$ and $\bar{\Gamma}$ are the interior and closure of $\Gamma$, respectively.
Furthermore, the rate functions $I_r$ and $J_r$ are continuous and convex on their effective domains, which are equal to the closed $\ell^1$-unit ball.
\end{prop}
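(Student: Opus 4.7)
The plan is to deduce the quenched and annealed LDPs by direct application of Zerner~\cite[Theorem~19]{Zer98a} for the quenched case and of Flury~\cite[Theorem~B]{Flu07} and Mourrat~\cite[Theorem~{1.10}]{Mou12} for the annealed case. First I would verify that for every $r \in (0,1]$ the Bernoulli potential meets the hypotheses of those works: nonnegativity, all finite moments, and finiteness of the associated Lyapunov exponents all hold trivially since $\omega(x) \in \{0,1\}$. The cited theorems then yield LDPs whose rate functions admit the Legendre-type representations in the statement, since $\alpha_r(\lambda,\cdot)$ and $\beta_r(\lambda,\cdot)$ are precisely the Lyapunov exponents associated with the shifted potential $\omega+\lambda$.

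The convexity of $I_r$ and $J_r$ is immediate: for every fixed $\lambda \geq 0$ both $\alpha_r(\lambda,\cdot)$ and $\beta_r(\lambda,\cdot)$ are norms (hence convex) on $\R^d$, so $I_r$ and $J_r$ are pointwise suprema of convex functions. To identify the effective domain with the closed $\ell^1$-unit ball, the key step is the linear-growth asymptotic
\[
\lim_{\lambda \to \infty} \frac{\alpha_r(\lambda,x)}{\lambda}
= \lim_{\lambda \to \infty} \frac{\beta_r(\lambda,x)}{\lambda}
= \|x\|_1, \qquad x \in \R^d.
\]
The lower bound $\alpha_r(\lambda,x), \beta_r(\lambda,x) \geq \lambda \|x\|_1$ follows because the walk needs at least $\|x\|_1$ steps to reach $x$, each contributing $\lambda$ to the shifted potential sum; the matching upper bound is obtained by restricting the expectation in $e(0,nx,\omega+\lambda)$ to a deterministic $\ell^1$-geodesic from $0$ to $nx$, whose contribution is bounded by $\|x\|_1(\log(2d)+\lambda+c_r)$ for a finite constant $c_r$ depending only on $r$. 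Combined with concavity of $\lambda \mapsto \alpha_r(\lambda,x)-\lambda$ and $\lambda \mapsto \beta_r(\lambda,x)-\lambda$, this forces $I_r(x), J_r(x)<\infty$ iff $\|x\|_1 \leq 1$.

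The step I expect to be the main obstacle is continuity of $I_r$ and $J_r$ up to the boundary of the $\ell^1$-unit ball; in the interior, convexity and finiteness automatically deliver continuity, so the real work is at $\|x\|_1=1$. The plan is to exploit joint continuity of $(\lambda,x) \mapsto \alpha_r(\lambda,x)$ and $\beta_r(\lambda,x)$ on $[0,\infty)\times\R^d$, as provided by \cite{Flu07,Mou12,Zer98a}, together with a uniform localization showing that, for $x$ in a neighborhood of the unit sphere, almost-maximizing parameters $\lambda$ in the definitions of $I_r(x)$ and $J_r(x)$ can be confined to a compact subset of $[0,\infty)$. Standard Legendre-duality arguments based on the displayed asymptotic reduce the continuity question to continuity on compacts, which is already in hand.
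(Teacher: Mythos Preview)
Your approach is correct and matches the paper's: the paper does not give a proof at all but simply states that the proposition is a direct application of \cite[Theorem~B]{Flu07}, \cite[Theorem~{1.10}]{Mou12} and \cite[Theorem~19]{Zer98a}, which are exactly the references you invoke. The additional arguments you sketch for convexity, the identification of the effective domain, and boundary continuity are sound, but in fact these properties are already contained in the cited theorems, so the paper simply absorbs them into the citation rather than re-deriving them.
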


Since exactly the same arguments used in the previous sections work for $\alpha_r(\lambda,\cdot)$ and $\beta_r(\lambda,\cdot)$, we can replace $\alpha_r(\cdot)$ and $\beta_r(\cdot)$ with $\alpha_r(\lambda,\cdot)$ and $\beta_r(\lambda,\cdot)$ in Theorems~\ref{thm:diff_qlyap} and \ref{thm:diff_alyap}, respectively (In particular, the constants $\Cr{qlyap}$ and $\Cr{alyap}$ can be chosen independently of $\lambda \geq 0$).
This derives the following differences between quenched and annealed rate functions.

\begin{cor}\label{cor:diff_rate}
Let $d \geq 1$ and $0<q_0<1$.
Then, there exist constants $\Cl{qrate}$ and $\Cl{arate}$ (which depend only on $d$ and $q_0$) such that the following results hold:
\begin{itemize}
	\item(Quenched case)
		For all $0<p<q<1$,
		\begin{align}\label{eq:l_qrate}
			\inf_{x \in \R^d \setminus \{0\}}	\frac{I_p(x)-I_q(x)}{\|x\|_1} \geq (1-e^{-1})(q-p),
		\end{align}
		and for all $0<p<q \leq q_0$,
		\begin{align*}
			\sup_{x \in \R^d \setminus \{0\}}	\frac{I_p(x)-I_q(x)}{\|x\|_1}
			\leq \Cr{qrate}(q-p).
		\end{align*}
	\item(Annealed case)
		For all $0<p<q<1$, we have \eqref{eq:l_qrate} with $I_p(\cdot)$ and $I_q(\cdot)$ replaced by $J_p(\cdot)$ and $J_q(\cdot)$, respectively.
		Moreover, for all $0<p<q \leq q_0$,
		\begin{align*}
			\sup_{x \in \R^d \setminus \{0\}}	\frac{J_p(x)-J_q(x)}{\|x\|_1}
			\leq \Cr{arate}(\log q-\log p).
		\end{align*}
		In particular, if $d \geq 3$, then the right side above can be replaced by $\Cr{arate}(q-p)$.
\end{itemize}
\end{cor}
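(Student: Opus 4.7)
The plan is to transfer the Lyapunov-exponent estimates to the rate functions $I_r, J_r$ by exploiting the Legendre-type representations $I_r(x) = \sup_{\lambda \geq 0}\{\alpha_r(\lambda, x) - \lambda\}$ and $J_r(x) = \sup_{\lambda \geq 0}\{\beta_r(\lambda, x) - \lambda\}$. I would first invoke the $\lambda$-parametrised extensions of Theorems~\ref{thm:diff_qlyap} and \ref{thm:diff_alyap} promised in the paragraph preceding the corollary: for every $\lambda \geq 0$, every $0 < p < q < 1$ (or $\leq q_0$ for the upper bounds), and every $x \in \R^d \setminus \{0\}$, the same inequalities hold with $\alpha_r(\cdot), \beta_r(\cdot)$ replaced by $\alpha_r(\lambda, \cdot), \beta_r(\lambda, \cdot)$, and the constants $\Cr{qlyap}, \Cr{alyap}$ can be chosen independently of $\lambda$.

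The lower bound on $I_p(x) - I_q(x)$ then follows from a standard Legendre-duality manoeuvre. Given $\varepsilon > 0$, pick $\lambda_\varepsilon \geq 0$ with $\alpha_q(\lambda_\varepsilon, x) - \lambda_\varepsilon \geq I_q(x) - \varepsilon$, so that
\begin{align*}
I_p(x)
&\geq \alpha_p(\lambda_\varepsilon, x) - \lambda_\varepsilon \\
&\geq \alpha_q(\lambda_\varepsilon, x) - \lambda_\varepsilon + (1 - e^{-1})(q-p)\|x\|_1 \\
&\geq I_q(x) - \varepsilon + (1 - e^{-1})(q-p)\|x\|_1,
\end{align*}
and sending $\varepsilon \downarrow 0$ yields \eqref{eq:l_qrate}. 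The symmetric argument---approximating the supremum defining $I_p(x)$ and using the $\lambda$-uniform upper bound on $\alpha_p(\lambda, \cdot) - \alpha_q(\lambda, \cdot)$---gives the upper bound $I_p(x) - I_q(x) \leq \Cr{qrate}(q-p)\|x\|_1$ with $\Cr{qrate} := \Cr{qlyap}$. The annealed statements for $J_p - J_q$ follow identically, with $\beta_r(\lambda, \cdot)$ in place of $\alpha_r(\lambda, \cdot)$, and the dimensional dichotomy ($\log q - \log p$ in general, $q - p$ when $d \geq 3$) is inherited directly from the extended Theorem~\ref{thm:diff_alyap}.

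The main obstacle is not the Legendre step but the $\lambda$-uniformity of the extended theorems. I would verify this by retracing Sections~\ref{sect:qlower_lyap}--\ref{sect:abound}: replacing $\omega$ with $\omega + \lambda$ shifts the site values to $\{\lambda, 1+\lambda\}$, but the relevant differences $\omega_z^1 - \omega_z^0$ remain equal to $1$, so Russo's formula in Lemma~\ref{prop:Russo} still produces the weights $e^{\pm \ell_z(H(y))}$; likewise, the lattice-animal comparison in Section~\ref{ssect:low} depends only on the comparison between ``open'' and ``closed'' boxes, a property invariant under a constant shift of the potential, and the annealed derivative in Lemma~\ref{lem:derivative_b} reads identically with $\omega(0) + \lambda$ in place of $\omega(0)$ because only the discrete gap of size $1$ enters the Bernoulli derivative. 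Hence every constant in Sections~\ref{sect:qlower_lyap}--\ref{sect:abound} is manifestly independent of $\lambda$, and the corollary reduces to the Legendre-duality argument above.
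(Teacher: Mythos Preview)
Your proposal is correct and follows essentially the same Legendre-duality route as the paper: both transfer the $\lambda$-uniform bounds on $\alpha_p(\lambda,\cdot)-\alpha_q(\lambda,\cdot)$ (resp.\ $\beta_p-\beta_q$) through the supremum defining $I_r$ (resp.\ $J_r$). The only cosmetic difference is that the paper works with an explicit maximiser $\lambda_q^{\rm qu}(x):=\inf\{\lambda>0:\partial_-\alpha_q(\lambda,x)\leq 1\}$, truncated by $t$ and then sent to infinity, whereas you use an $\varepsilon$-approximation of the supremum; these are equivalent formulations of the same argument.
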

\begin{proof}
We treat only the upper bound for the quenched case since the same argument works for the other cases.
Set for any $0<r<1$ and $x \in \R^d$,
\begin{align*}
	\lambda_r^{\rm qu}(x):=\inf\{ \lambda>0:\partial_- \alpha_r(\lambda,x) \leq 1 \},
\end{align*}
where $\partial_-\alpha_r(\lambda,x)$ is the left derivative of $\alpha_r(\lambda,x)$ with respect to $\lambda$ (Note that $\lambda_r^{\rm qu}(x)$ may be equal to $\infty$).
Clearly, $\lambda_r^{\rm qu}(x)$ attains the supremum in the definition of $I_r(x)$.
Hence, Theorem~\ref{thm:diff_qlyap} with $\alpha_r(\cdot)$ replaced by $\alpha_r(\lambda,\cdot)$ implies that for any $0<p<q<1$, $t \geq 0$ and $x \in \R^d \setminus \{0\}$,
\begin{align*}
	&\frac{I_p(x)-\{ \alpha_q(\lambda_q^{\rm qu}(x) \wedge t,x)-(\lambda_q^{\rm qu}(x) \wedge t) \}}{\|x\|_1}\\
	&\geq \frac{\alpha_p(\lambda_q^{\rm qu}(x) \wedge t,x)-\alpha_q(\lambda_q^{\rm qu}(x) \wedge t,x)}{\|x\|_1}
	\geq (1-e^{-1})(q-p).
\end{align*}
Since $\alpha_q(\lambda,x)$ is continuous in $\lambda$, letting $t \to \infty$ proves \eqref{eq:l_qrate}.
\end{proof}

\appendix
\section{Differentiation under the integral sign}\label{app:diff_int}
The aim of this section is to discuss differentiation under the integral sign in the proof of Lemma~\ref{lem:derivative_b}.
We first mention differentiation under the integral sign in measure theory (see for instance \cite[Theorem~{6.28}]{Kle13_book}).

\begin{lem}\label{lem:difflem}
Let $I$ be a nonempty open interval of $\R$ and let $\Sigma$ be a measure space equipped with a measure $\mu$.
Suppose that $f:I \times \Sigma \to \R$ is a function satisfying the following conditions:
\begin{enumerate}
	\item\label{item:difflem1}
		For any $r \in I$, the function $\sigma \mapsto f(r,\sigma)$ is $\mu$-integrable.
	\item\label{item:difflem2}
		For $\mu$-a.e.~$\sigma \in \Sigma$, the function $r \mapsto f(r,\sigma)$ is differentiable at $r \in I$ with derivative $f_r(r,\sigma)$.
	\item\label{item:difflem3}
		There exists a $\mu$-integrable function $g:\Sigma \to \R$ such that $|f_r(r,\sigma)| \leq g(\sigma)$ holds for all $r \in I$ and for $\mu$-a.e.~$\sigma \in \Sigma$.
\end{enumerate}
Then, $f_r(r,\cdot)$ is $\mu$-integrable for each $r \in I$, and the function $F:r \mapsto \int_\Sigma f(r,\sigma)\,\mu(d\sigma)$ is differentiable at $r \in I$ with derivative
\begin{align*}
	\frac{d}{dr} F(r)=\int_\Sigma f_r(r,\sigma)\,\mu(d\sigma).
\end{align*}
\end{lem}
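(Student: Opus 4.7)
The plan is to reduce the statement to Lebesgue's dominated convergence theorem applied to the difference quotients of $f$ in the $r$-variable, with the mean value theorem providing the required pointwise domination.

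First I would fix $r \in I$ and an arbitrary sequence $(r_n)_{n \geq 1} \subset I \setminus \{r\}$ with $r_n \to r$. Since $I$ is open, for all $n$ large enough the closed interval $J_n := [\min(r,r_n),\max(r,r_n)]$ is contained in $I$. By hypothesis~\eqref{item:difflem2} there is a $\mu$-null set $N \subset \Sigma$ such that for every $\sigma \in \Sigma \setminus N$ the function $r \mapsto f(r,\sigma)$ is differentiable (in particular continuous) at every point of $I$. For such $\sigma$, the mean value theorem applied on $J_n$ yields some $\xi_n(\sigma) \in J_n$ with
\begin{equation*}
\frac{f(r_n,\sigma)-f(r,\sigma)}{r_n - r} = f_r(\xi_n(\sigma),\sigma).
\end{equation*}

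Next I would verify the hypotheses of dominated convergence for the integrand $h_n(\sigma) := (f(r_n,\sigma)-f(r,\sigma))/(r_n-r)$. Hypothesis~\eqref{item:difflem3} gives $|h_n(\sigma)| = |f_r(\xi_n(\sigma),\sigma)| \leq g(\sigma)$ for $\mu$-a.e.\ $\sigma$ and all large $n$, providing a $\mu$-integrable dominant independent of $n$. Hypothesis~\eqref{item:difflem2} gives the pointwise convergence $h_n(\sigma) \to f_r(r,\sigma)$ for $\mu$-a.e.\ $\sigma$. As a $\mu$-a.e.\ pointwise limit of the measurable functions $h_n$ (each measurable thanks to hypothesis~\eqref{item:difflem1}), the map $f_r(r,\cdot)$ is $\mu$-measurable, and the bound by $g$ makes it $\mu$-integrable.

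Then I would apply the dominated convergence theorem to conclude
\begin{equation*}
\lim_{n \to \infty} \frac{F(r_n)-F(r)}{r_n-r} = \lim_{n\to\infty} \int_\Sigma h_n(\sigma)\,\mu(d\sigma) = \int_\Sigma f_r(r,\sigma)\,\mu(d\sigma).
\end{equation*}
Since $(r_n)$ was an arbitrary sequence in $I \setminus \{r\}$ converging to $r$, this establishes differentiability of $F$ at $r$ together with the claimed formula for $F'(r)$. The only subtle step is the justification of the mean value theorem, which is what forces the differentiability in~\eqref{item:difflem2} to hold at every point of the open interval $I$ (rather than only at isolated points); the openness of $I$ is used precisely to guarantee $J_n \subset I$ for large $n$ so that $f(\cdot,\sigma)$ is differentiable throughout a neighborhood of $r$. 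I do not expect any genuine obstacle beyond this bookkeeping point.
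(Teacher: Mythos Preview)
Your proof is correct and is the standard textbook argument for differentiation under the integral sign. Note, however, that the paper does not actually prove this lemma: it simply quotes it as a known result from measure theory, citing \cite[Theorem~6.28]{Kle13_book}. Your argument via the mean value theorem and dominated convergence is precisely the standard proof one finds in such references, so there is nothing to compare beyond observing that you have supplied what the paper deliberately outsourced.
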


The following proposition enables us to differentiate under the integral sign in the proof of Lemma~\ref{lem:derivative_b}.

\begin{prop}
Fix $y \in \Z^d \setminus \{0\}$ and define for $0<r<1$ and the trajectory $\S$ of the simple random walk,
\begin{align*}
	\phi(r,\S)
	:= \prod_{\substack{z \in \Z^d\\ \ell_z(H(y,\S),\S) \geq 1}} \E_r\bigl[ e^{-\ell_z(H(y,\S),\S)\omega(0)} \bigr] \1{\{ H(y,\S)<\infty \}}.
\end{align*}
Then, we have for all $0<r<1$,
\begin{align}\label{eq:diff_an}
	\frac{d}{dr} E^0[\phi(r,\S)]=E^0\biggl[ \frac{d}{dr} \phi(r,\S) \biggr].
\end{align}
\end{prop}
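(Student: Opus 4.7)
The plan is to invoke Lemma~\ref{lem:difflem} with $I = (a,b) \subset (0,1)$ a small open neighborhood of any given target $r_0 \in (0,1)$, $\Sigma$ the space of trajectories $\S$ of the simple random walk equipped with $\mu = P^0$, and $f(r,\S) = \phi(r,\S)$. Verifying the three hypotheses of that lemma yields \eqref{eq:diff_an} at $r_0$, and since $r_0$ is arbitrary the conclusion holds on all of $(0,1)$.

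Conditions~\eqref{item:difflem1} and~\eqref{item:difflem2} are essentially immediate. For~\eqref{item:difflem1}, each factor $\E_r[e^{-\ell_z(H(y,\S),\S)\omega(0)}] = r + (1-r)e^{-\ell_z(H(y,\S),\S)}$ lies in $[0,1]$, so $\phi(r,\S) \leq \1{\{H(y,\S) < \infty\}} \leq 1$ and $\phi(r,\cdot)$ is $P^0$-integrable. For~\eqref{item:difflem2}, on the event $\{H(y,\S) < \infty\}$ the product defining $\phi(r,\S)$ has only finitely many factors (at most $\#\mathcal{A}(0,y,\S)$), each of which is affine in $r$; hence $r \mapsto \phi(r,\S)$ is a polynomial in $r$ and in particular differentiable with derivative given by \eqref{eq:derivative_phi}.

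The main step is condition~\eqref{item:difflem3}, i.e.~producing a $P^0$-integrable dominating function for $|\phi_r(r,\S)|$ valid uniformly for $r \in (a,b)$. Two uniform estimates will be combined. First, since each summand in \eqref{eq:derivative_phi} has numerator in $[0,1]$ and denominator $r + (1-r)e^{-\ell_z} \geq r \geq a$, the bracketed sum in \eqref{eq:derivative_phi} is bounded by $a^{-1}\#\mathcal{A}(0,y,\S)$ on $\{H(y,\S) < \infty\}$. Second, whenever $\ell_z(H(y,\S),\S) \geq 1$ and $r \leq b$, the factor $r + (1-r)e^{-\ell_z}$ is a convex combination of $1$ and $e^{-\ell_z} \leq e^{-1}$, hence bounded above by $c_b := b(1-e^{-1}) + e^{-1} < 1$; multiplying over the $\#\mathcal{A}(0,y,\S)$ distinct visited sites gives $\phi(r,\S) \leq c_b^{\#\mathcal{A}(0,y,\S)}\1{\{H(y,\S)<\infty\}}$. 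Combining these bounds yields
\begin{align*}
|\phi_r(r,\S)| \leq g(\S) := a^{-1}\, \#\mathcal{A}(0,y,\S)\, c_b^{\#\mathcal{A}(0,y,\S)}\,\1{\{H(y,\S)<\infty\}}
\end{align*}
for all $r \in (a,b)$.

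Finally, $g$ is $P^0$-integrable since geometric decay dominates linear growth: writing $N = \#\mathcal{A}(0,y,\S)$, one has
\begin{align*}
E^0[g(\S)] = a^{-1} \sum_{n=1}^\infty n\, c_b^n\, P^0(N = n,\, H(y,\S)<\infty) \leq a^{-1}\sum_{n=1}^\infty n\, c_b^n < \infty,
\end{align*}
because $c_b < 1$. All hypotheses of Lemma~\ref{lem:difflem} are thus satisfied, and the conclusion $(d/dr)E^0[\phi(r,\S)] = E^0[\phi_r(r,\S)]$ follows at every $r \in (0,1)$. The only delicate point is the uniform bound on $\phi(r,\S)$: a naive estimate $\phi(r,\S) \leq 1$ gives $|\phi_r| \leq a^{-1}\#\mathcal{A}(0,y,\S)\1{\{H(y,\S)<\infty\}}$, which need not be $P^0$-integrable in low dimensions; the factor $c_b^{\#\mathcal{A}}$ coming from the product structure of $\phi$ is what makes the domination work.
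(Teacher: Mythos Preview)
Your proof is correct, and the overall strategy---applying Lemma~\ref{lem:difflem} on a neighborhood of an arbitrary $r_0$ and checking the three hypotheses---matches the paper's. The difference lies in how you produce the dominating function for condition~\eqref{item:difflem3}.

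The paper first proves that for each fixed $z$, the map $r \mapsto \phi(r,\sigma)/(r+e^{-\ell_z}(1-r))$ is increasing on $(0,1)$, and hence bounds $|f_r(r,\sigma)|$ for $r<r_0$ by $r_0^{-1}\phi(r_0,\sigma)\,\#\mathcal{A}(0,y,\sigma)$. Integrability of this dominating function then relies on the external estimate~\eqref{eq:qLA} (essentially Zerner's Lemma~3) for $\tilde{\E}_{r_0}^{0,y}[\#\mathcal{A}(0,y)]$. Your route instead bounds each factor of $\phi$ by $c_b<1$, giving $\phi(r,\S) \leq c_b^{\#\mathcal{A}(0,y,\S)}$ directly; the geometric decay then makes integrability of $g$ a one-line consequence of $\sum_n n c_b^n<\infty$, with no appeal to~\eqref{eq:qLA}. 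Your argument is therefore more self-contained and arguably more elementary; the paper's approach has the minor advantage that its dominating function is expressed via the path measure $\tilde{\P}_{r_0}^{0,y}$, which ties in with the surrounding machinery.
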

\begin{proof}
Fix $y \in \Z^d \setminus \{0\}$ and let $0<r_0<1$.
It suffices to prove \eqref{eq:diff_an} for all $0<r<r_0$.
To this end, set $I:=(0,r_0)$, $\Sigma:=(\Z^d)^{\N_0}$ (equipped with the probability measure $\mu:=P^0(\S \in \cdot)$) and
\begin{align*}
	f(r,\sigma):=\phi(r,\sigma),\qquad r \in I,\,\sigma \in \Sigma.
\end{align*}
Then, we can rewrite $E^0[\phi(r,\S)]$ as
\begin{align*}
	E^0[\phi(r,\S)]=\int_\Sigma f(r,\sigma)\,\mu(d\sigma).
\end{align*}
Hence, for \eqref{eq:diff_an}, let us check conditions \eqref{item:difflem1}--\eqref{item:difflem3} in Lemma~\ref{lem:difflem}.

Condition~\eqref{item:difflem1} is clearly satisfied since $0 \leq f(r,\sigma) \leq 1$ holds for each $r \in I$ and $\sigma \in \Sigma$.
Furthermore, by \eqref{eq:derivative_phi}, for any $\sigma \in \Sigma$,
\begin{align*}
	f_r(r,\sigma)
	= \phi(r,\sigma) \sum_{\substack{z \in \Z^d\\ \ell_z(H(y,\sigma),\sigma) \geq 1}}
	\frac{1-e^{-\ell_z(H(y,\sigma),\sigma)}}{r+e^{-\ell_z(H(y,\sigma),\sigma)}(1-r)},
\end{align*}
and condition~\eqref{item:difflem2} is valid.
It remains to check condition~\eqref{item:difflem3}.
To this end, let us observe that for fixed $\sigma \in \Sigma$ and $z \in \Z^d$ with $\ell_z(H(y,\sigma),\sigma) \geq 1$, the function
\begin{align*}
	h(r):=\frac{\phi(r,\sigma)}{r+e^{-\ell_z(H(y,\sigma),\sigma)}(1-r)}
\end{align*}
is increasing in $r \in (0,1)$.
A standard calculation shows that for each $z \in \Z^d$,
\begin{align*}
	\frac{d}{dr}h(r)
	&= \frac{\phi(r,\sigma)}{r+e^{-\ell_z(H(y,\sigma),\sigma)}(1-r)}\\
	&\quad \times \sum_{\substack{w \in \Z^d \setminus \{z\}\\ \ell_w(H(y,\sigma),\sigma) \geq 1}}
		\frac{1-e^{-\ell_w(H(y,\sigma),\sigma)}}{r+e^{-\ell_w(H(y,\sigma),\sigma)}(1-r)}
		\geq 0,
\end{align*}
which implies that $h(r)$ is increasing in $r \in (0,1)$.
Hence, we have for all $r \in I$ and $\sigma \in \Sigma$,
\begin{align*}
	|f_r(r,\sigma)|
	&\leq \sum_{\substack{z \in \Z^d\\ \ell_z(H(y,\sigma),\sigma) \geq 1}}
		\frac{\phi(r_0,\sigma)}{r_0+e^{-\ell_z(H(y,\sigma),\sigma)}(1-r_0)}\\
	&\leq r_0^{-1} \phi(r_0,\sigma) \times \#\{ z \in \Z^d:\ell_z(H(y,\sigma),\sigma) \geq 1 \}
		=:g(\sigma).
\end{align*}
Note that
\begin{align*}
	\int_\Sigma g(\sigma)\,d\mu
	\leq r_0^{-1} \tilde{\E}_{r_0}^{0,y}[\#\mathcal{A}(0,y)].
\end{align*}
By \eqref{eq:qLA}, one has
\begin{align*}
	\tilde{\E}_{r_0}^{0,y}[\#\mathcal{A}(0,y)] \leq \frac{1+\log(2d)}{-\log\{ e^{-1}+(1-e^{-1})r_0 \}}<\infty,
\end{align*}
and the integrability of $g(\sigma)$ follows.
Therefore, condition~\eqref{item:difflem3} is also satisfied.
\end{proof}

\section*{Acknowledgements}
The author thanks Masato Takei for useful discussions.



\end{document}